\newtheorem{theorem}{Theorem}[section]
\newtheorem{lemma}[theorem]{Lemma}
\newtheorem{proposition}[theorem]{Proposition}
\newtheorem{corollary}[theorem]{Corollary}
\theoremstyle{remark}
\newtheorem{remark}[theorem]{Remark}
\newcommand{\convd}{\stackrel{\mathcal{D}}{\to}}
\begin{document}

\title[First\hyp{}passage percolation on width\hyp{}two stretches]{First\hyp{}passage percolation on width\hyp{}two stretches with exponential link weights}
\author{Eckhard Schlemm}

\address{Wolfson College, Cambridge University}
\email{es555@cam.ac.uk}

\subjclass{Primary: 60K35; secondary: 60J05}
\keywords{ergodicity, first\hyp{}passage percolation, Markov chains, percolation rate}

\begin{abstract}
We consider the first\hyp{}passage percolation problem on effectively one\hyp{}dimensional graphs with vertex
set $\{1 . . . , n\}\times\{0, 1\}$ and translation\hyp{}invariant edge\hyp{}structure. For three of six non\hyp{}trivial cases we
obtain exact expressions for the asymptotic percolation rate $\chi$ by solving certain recursive distributional
equations and invoking results from ergodic theory to identify $\chi$ as the expected asymptotic one\hyp{}step
growth of the first\hyp{}passage time from $(0, 0)$ to $(n, 0)$.
\end{abstract}

\maketitle

\section{Introduction}
Let $G=(V,E)$ be a graph with vertex set $V=V(G)$ and unoriented edges $E=E(G)\subset V^2$. Two vertices $u,v\in V$ are said to be adjacent, for which we write $u\sim v$ if $(u,v)\in E$ and the edge $e=(u,v)$ is said to join the vertices $u$ and $v$. Assume there is a weight\hyp{}function $w:E\to\mathbb{R}$. For any two vertices $u,v\in V$ a {\it path $p$ joining $u$ and $v$ in $G$} is a collection of vertices $\{u=p_0,p_1,\dots,p_{n-1},p_n=v\}$ such that $p_\nu$ and $p_{\nu+1}$ are adjacent for $0\leq\nu<n$; a path $p$ is called {\it simple} if each vertex occurs in $p$ at most once. To a given path $p$ we associate the set of its comprising edges $\hat p = \{(p_\nu,p_{\nu+1}): 0\leq\nu<n\}$. The {\it weight} $w(p)$ of a path $p$ is then defined as $\sum_{e\in \hat p}{w(e)}$. We define $d_G: V\times V\to\mathbb{R}$ by $d_G(u,v)=\inf{\{w(p): p \text{ a path joining $u$ and $v$ in $G$}\}}$, and call $d_G(u,v)$ the {\it first\hyp{}passage time} between $u$ and $v$. A path p joining $u$ and $v$ in $G$, such that $w(p)=d_G(u,v)$ is called a {\it shortest path}. Throughout this work we assume that $G$ is finite and connected and we will be interested in the case that the weights $w(e)$, $e\in E$, are random variables; the goal is then to make probability statements about first\hyp{}passage times or related quantities. In general we note here that a shortest path $p=\{u=p_0,\dots,p_n=v\}$ is always simple and that each sub\hyp{}path $\{p_k,\dots,p_l\}$, $0\leq k < l\leq n$, is also a shortest path. Moreover, for continuously distributed, independent edge weights the shortest path between any two vertices is almost surely unique \citep{kesten1986aspects}.

The typical first\hyp{}passage percolation problem is based on two-dimensional regular infinite graphs $G$ with vertex sets $V(G)=\mathbb{Z}^2$. The edge weights $w(e)$, $e\in E(G)$, are i.i.d random variables in $L^1$ with some common distribution $\mathbb{P}$ such that $\mathbb{P}$-almost surely $w(e)$ is positive. Let $l_{m\to n}$, $0\leq m\leq n$, denote the first\hyp{}passage time from $(m,0)$ to $(n,0)$ subject to the condition that the contributing paths consist only of vertices with first coordinate $\nu$, $m\leq\nu\leq n$ and write $l_n\coloneqq l_{0\to n}$. By \cref{subadd}, $l_{0\to n}\leq l_{0\to m}+l_{m\to n}$ and the theory of sub\hyp{}additive processes (\cref{prop-FPP-rate}) implies that $\lim_{n\to\infty}{\frac{1}{n}l_n}$ exists and is almost surely constant. This number, which depends only on the graph $G$ and the distribution $\mathbb{P}$, is denoted by $\chi(G,\mathbb{P})$ and called the {\it (asymptotic) percolation rate} or {\it time constant}. The explicit calculation of the percolation rate even for the simplest regular infinite graphs and distributions $\mathbb{P}$ is characterized in \citet[p. 1937]{graham1995} as a "hopelessly intractable" problem.
\section{The model}
In this work we first focus on the first\hyp{}passage percolation problem as described in the last paragraph for certain families of regular, effectively one\hyp{}dimensional graphs with independent, random hyp{}weights. Let $V_n=\{0,1,\dots, n\}\times\{0,1\}$ and
\begin{align*}
\mathcal{V}_n =& \{\left((i,0),(i+1,1)\right):  0\leq i < n\} \\
\mathcal{W}_n =& \{\left((i,1),(i+1,0)\right):  0\leq i < n\}\\
\mathcal{X}_n =& \{\left((i,0),(i+1,0)\right):  0\leq i < n\} \\
\mathcal{Y}_n =& \{\left((i,1),(i+1,1)\right):  0\leq i < n\} \\
\mathcal{Z}_n =& \{\left((i,0),(i,1)\right):    0\leq i \leq n\}.
\end{align*}
To each subset $\mathcal{E}\subset\{\mathcal{V},\mathcal{W},\mathcal{X},\mathcal{Y},\mathcal{Z}\}$ corresponds a family of graphs
\begin{equation*}
\mathcal{G}^{\mathcal{E}}=\{G^{\mathcal{E}}_n\}_{n\in\mathbb{N}}=\{(V_n,\mathcal{E}_n)\}_{n\in\mathbb{N}}
\end{equation*}
with vertex sets $V_n$ and edges $\mathcal{E}_n=\bigcup_{\mathcal{L}\in \mathcal{E}}{\mathcal{L}_n}$. For each graph the edge weights are independent exponentially distributed random variables which are labelled $V_i,W_i,X_i,Y_i,Z_i$ in the obvious way. By time\hyp{}scaling it is no restriction of generality if we assume the parameter of the edge weight distributions to be unity. We denote the probability measure by $\mathbb{P}$, that is $d\mathbb{P}(x)=e^{-x}dx$. We point out that by construction, for any selection of edges $\mathcal{E}$, $G^{\mathcal{E}}_n$ is a subgraph of $G^\mathcal{E}_{n+1}$, an observation which forms the basis for our inductive argument described below. The method is based upon \citet{flaxman2006fpp} where it has successfully been employed to compute the asymptotic percolation rate on $\mathcal{G}^{\{\mathcal{X},\mathcal{Y},\mathcal{Z}\}}$ where the edge weights are taken to be independently one with probability $p$ and zero with probability $1-p$. They also consider continuous edge\hyp{}weight distributions and show that one can replace those by suitably chosen discrete ones to obtain arbitrarily good, yet approximative values for the time constant. We do not adopt this method here but rather work explicitly with the continuous distributions.
\section{Results}
It turns out that for only six families of graphs $\mathcal{G}^{\mathcal{E}}$ the first\hyp{}passage percolation is non\hyp{}trivial. For three of these six, namely for $\mathcal{G}^{\{\mathcal{X},\mathcal{Y},\mathcal{Z}\}}$, $\mathcal{G}^{\{\mathcal{V},\mathcal{W},\mathcal{X},\mathcal{Y}\}}$ and $\mathcal{G}^{\{\mathcal{W},\mathcal{X},\mathcal{Y},\mathcal{Z}\}}$ we derive expressions for $\chi$ which seem not to have been known so far (see \cref{tabl:graphs-2}). We are confident that our method works for the three remaining families, $\mathcal{G}^{\{\mathcal{V},\mathcal{W},\mathcal{X}\}}$, $\mathcal{G}^{\{\mathcal{V},\mathcal{W},\mathcal{X},\mathcal{Z}\}}$ and $\mathcal{G}^{\{\mathcal{V},\mathcal{W},\mathcal{X},\mathcal{Y},\mathcal{Z}\}}$, as well. Increased complexity in the calculations involved, however, prevents us from obtaining analytic results; the numerical values of $\chi$ for theses cases, which are also recorded in \cref{tabl:graphs-2}, were obtained by means of simulations.
\begin{table}
\caption[Width\hyp{}two stretches $G^E$ - Non\hyp{}trivial solved and unsolved cases]{Width\hyp{}two stretches $\mathcal{G}^{\mathcal{E}}$: Non\hyp{}trivial solved and unsolved cases}
\label{tabl:graphs-2}
\begin{tabular}{|m{0.21\textwidth}|m{.25\textwidth}|m{.3\textwidth}|}
\hline
\begin{minipage}[c]{0.21\textwidth}\begin{center}\boldmath{$ \mathcal{E}$}\end{center}\end{minipage} & \begin{minipage}[c]{.25\textwidth}\begin{center}\bfseries{Pictograph}\end{center}\end{minipage} & \begin{minipage}[b]{.3\textwidth}\begin{center}\boldmath{$ \chi\left(\mathcal{E}\right)$}\end{center}\end{minipage}\\
\hline\hline\hline
\begin{minipage}[c]{0.21\textwidth}\begin{center}$\{\mathcal{X},\mathcal{Y},\mathcal{Z}\}$\end{center}\end{minipage} &\begin{minipage}[c]{.25\textwidth}\includegraphics[width=\textwidth]{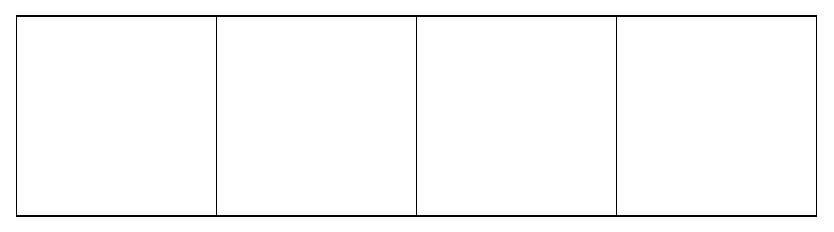}\end{minipage} & \begin{minipage}[b]{.3\textwidth}\begin{center}\[\frac{3}{2}-\frac{\operatorname{J}_1(2)}{2\operatorname{J}_2(2)}\]
\end{center}\end{minipage}\\
\hline
\begin{minipage}[c]{0.21\textwidth}\begin{center}$\{\mathcal{V},\mathcal{W},\mathcal{X},\mathcal{Y}\}$\end{center}\end{minipage} &\begin{minipage}[c]{.25\textwidth}\includegraphics[width=\textwidth]{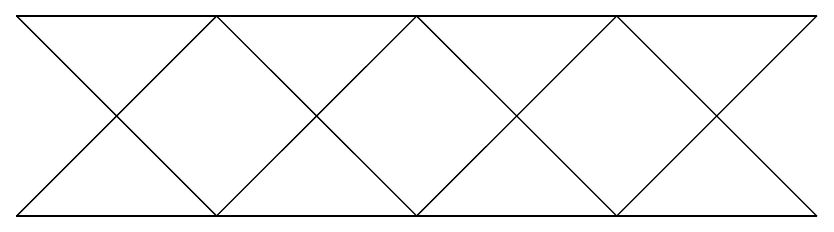}\end{minipage} & \begin{minipage}[b]{.3\textwidth}\begin{center}\[\frac{3}{4}-\frac{\operatorname{J}_0(\sqrt{2})}{2\sqrt{2}\operatorname{J}_1(\sqrt{2})}\] \end{center}\end{minipage}\\
\hline
\begin{minipage}[c]{0.21\textwidth}\begin{center}$\{\mathcal{W},\mathcal{X},\mathcal{Y},\mathcal{Z}\}$\end{center}\end{minipage} &\begin{minipage}[c]{.25\textwidth}\includegraphics[width=\textwidth]{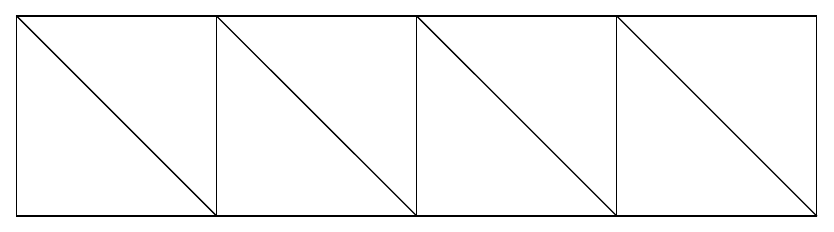}\end{minipage} & \begin{minipage}[b]{.3\textwidth}\begin{center}\[\frac{2\tan 1 - 2}{2\tan 1 - 1}\]\end{center}\end{minipage}\\
\hline
\hline
\begin{minipage}[c]{0.21\textwidth}\begin{center}$\{\mathcal{V},\mathcal{W},\mathcal{X}\}$\end{center}\end{minipage} & \begin{minipage}[c]{.25\textwidth}\includegraphics[width=\textwidth]{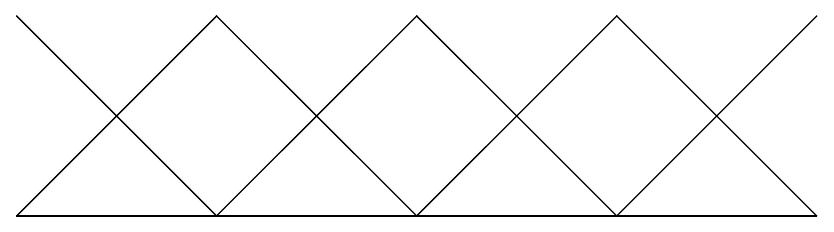}\end{minipage} & \begin{minipage}[b]{.3\textwidth}\begin{center}\[\approx.51\]\end{center}\end{minipage}\\
\hline
\begin{minipage}[c]{0.21\textwidth}\begin{center}$\{\mathcal{V},\mathcal{W},\mathcal{X},\mathcal{Z}\}$\end{center}\end{minipage} & \begin{minipage}[c]{.25\textwidth}\includegraphics[width=\textwidth]{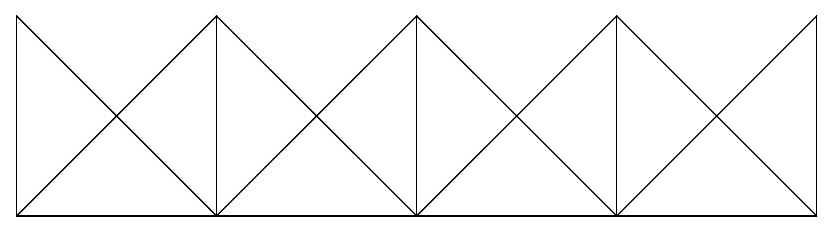}\end{minipage}& \begin{minipage}[b]{.3\textwidth}\begin{center}\[\approx .45\]\end{center}\end{minipage}\\
\hline
\begin{minipage}[c]{0.21\textwidth}\begin{center}$\{\mathcal{V},\mathcal{W},\mathcal{X},\mathcal{Y},\mathcal{Z}\}$\end{center}\end{minipage} & \begin{minipage}[c]{.25\textwidth}\includegraphics[width=\textwidth]{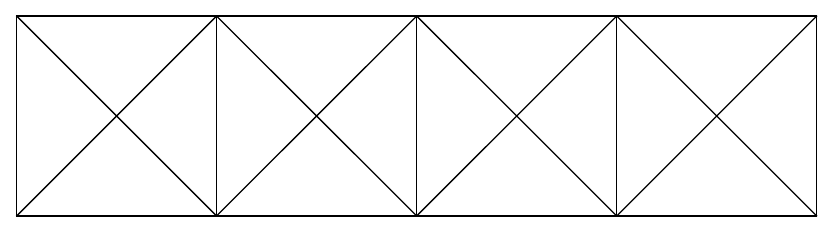}\end{minipage}& \begin{minipage}[b]{.3\textwidth}\begin{center}\[\approx .35\]\end{center}\end{minipage}\\
\hline
\end{tabular}
\end{table}

\section{Subadditivity and ergodic theorems}
In this section we present some theoretical results about first\hyp{}passage times. In particular we show that the asymptotic percolation rate $\lim_{n\to\infty}{\frac{1}{n}l_n}$ on graphs with vertex set $V_n\coloneqq\{0,\dots, n\}\times\{0,1\}$ and translation\hyp{}invariant edge structure is, with probability one, equal to a deterministic number and provide a formula for its computation (\cref{FPP-rate-formula}).
\begin{proposition}
\label{prop-FPP-rate}
Let $G$ be the graph $G_n^{\mathcal{E}}$ for some $\mathcal{E}\subset\{\mathcal{V},\mathcal{W},\mathcal{X},\mathcal{Y},\mathcal{Z}\}$. If the edge weights $w(e)$, $e\in \mathcal{E}_n$, are independent, identically $\mathbb{P}$-distributed, positive and integrable random variables then almost surely $\lim_{n\to\infty}{\frac{1}{n}l_n}$ exists and is equal to a deterministic number $\chi\left(G,\mathbb{P}\right)$.
\end{proposition}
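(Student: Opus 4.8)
The plan is to recognise $\{l_{m\to n}\}_{0\le m\le n}$ as a \emph{subadditive process} in the sense of Kingman and to combine the subadditive ergodic theorem with the ergodicity of the underlying i.i.d.\ weight field.

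First I would pass to the infinite graph $G^{\mathcal{E}}_{\infty}$ with vertex set $\mathbb{Z}\times\{0,1\}$, whose edge set consists of all integer translates of the defining ``brick'' of edges. Since $G^{\mathcal{E}}_n$ is a subgraph of $G^{\mathcal{E}}_{n+1}$, the column\hyp{}restricted first\hyp{}passage times $l_{m\to n}$ — defined via paths whose first coordinates lie in $\{m,\dots,n\}$ — may be computed inside $G^{\mathcal{E}}_{\infty}$ and are thereby consistently defined for all $0\le m\le n$; connectedness of $G$ gives $l_{m\to n}<\infty$ almost surely. Because the weights form an i.i.d.\ field on the edges, the coordinate shift $\theta\colon (i,j)\mapsto(i+1,j)$ induces a measure\hyp{}preserving transformation on the probability space that is ergodic (a shift of a product measure is ergodic), and the translation\hyp{}invariance of the edge structure yields the cocycle identity $l_{m\to n}=l_{0\to n-m}\circ\theta^{m}$; in particular the array $\{l_{m\to n}\}$ is stationary under $(m,n)\mapsto(m+1,n+1)$.

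It then remains to check the remaining hypotheses of the subadditive ergodic theorem. Subadditivity $l_{0\to n}\le l_{0\to m}+l_{m\to n}$ for $0\le m\le n$ is exactly the inequality already recorded: concatenating a path realising $l_{0\to m}$ with one realising $l_{m\to n}$ produces an admissible path from $(0,0)$ to $(n,0)$, whose weight is the sum of the two weights. Positivity of the weights gives $l_{0\to n}\ge 0$, hence $\mathbb{E}[l_{0\to n}]\ge 0$ for all $n$, and integrability of the weights gives $\mathbb{E}[l_{0\to 1}]\le\mathbb{E}[\,\text{sum of the weights in one brick}\,]<\infty$, so that $\mathbb{E}[l_{0\to n}]\le n\,\mathbb{E}[l_{0\to 1}]<\infty$ for every $n$. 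Kingman's subadditive ergodic theorem then gives $\tfrac1n l_n\to\chi$ almost surely and in $L^{1}$, with $\chi=\lim_{n\to\infty}\tfrac1n\mathbb{E}[l_n]=\inf_{n}\tfrac1n\mathbb{E}[l_n]$; and since $\theta$ is ergodic, the limit $\chi$ is almost surely equal to this deterministic number, depending only on $G$ and $\mathbb{P}$. This would establish the proposition.

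The main obstacle is essentially bookkeeping rather than probability: one must make the reduction to $G^{\mathcal{E}}_{\infty}$ precise and verify that the column\hyp{}restricted times $l_{m\to n}$ are genuinely stationary. The slight asymmetry in the definition of $\mathcal{Z}_n$ (vertical edges present at both endpoints $i=0$ and $i=n$) and the fact that the $G^{\mathcal{E}}_n$ are finite graphs both call for a little care, but introduce no new ideas; all the analytic content sits in the classical subadditive ergodic theorem.
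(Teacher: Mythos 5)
Your proposal is correct and rests on the same engine as the paper: verify subadditivity, stationarity (from translation invariance of the edge structure and the i.i.d.\ weights) and integrability, then apply the subadditive ergodic theorem. The only genuine difference is how the limit is shown to be deterministic. You set up the shift $\theta$ on the i.i.d.\ weight field, record the cocycle identity $l_{m\to n}=l_{0\to n-m}\circ\theta^{m}$, and invoke the ergodic form of Kingman's theorem, so that constancy (together with $L^{1}$ convergence and the identification $\chi=\inf_n\frac{1}{n}\mathbb{E}[l_n]$) comes for free from ergodicity of the product-measure shift. The paper instead uses Liggett's version only to get almost sure existence of the limit, and then argues separately that $\chi$ is measurable with respect to the tail $\sigma$-algebra of the edge-weight sequence, so Kolmogorov's zero--one law forces it to be constant. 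Your route is slightly more economical and yields the extra information $\chi=\lim_n\frac{1}{n}\mathbb{E}[l_n]$, at the cost of having to make the passage to the infinite strip and the stationarity/ergodicity of the cocycle precise (the bookkeeping you yourself flag, including the vertical edges shared at block boundaries when $\mathcal{Z}\in\mathcal{E}$); the paper's zero--one law argument avoids that setup entirely. Both are complete, standard arguments for this statement.
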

To prove this we need a lemma about a property of first\hyp{}passage times, called {\it subadditivity}.
\begin{lemma}
\label{subadd}
For any weighted graph $G$, the function $d_G:V(G)^2\to\mathbb{R}$ is subadditive, that is $d_G(u,w)\leq d_G(u,v)+d_G(v,w)$ for all $u,v,w\in V(G)$. If in particular $V(G)=V_n$, this says $l_{0\to n}\leq l_{0\to m}+l_{m\to n} $ holds for any $0\leq m\leq n$.
\end{lemma}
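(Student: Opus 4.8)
The plan is to prove the subadditivity of $d_G$ by the obvious concatenation argument and then read off the special case $V(G)=V_n$. First I would fix $u,v,w\in V(G)$ and note that, since $G$ is finite, there are only finitely many simple paths joining any given pair of vertices, and a shortest path is always simple (as recalled just above the statement); hence the infima defining $d_G(u,v)$ and $d_G(v,w)$ are attained. Picking a shortest path $p=\{u=p_0,\dots,p_k=v\}$ and a shortest path $q=\{v=q_0,\dots,q_l=w\}$, I would form their concatenation $r=\{u=p_0,\dots,p_k=q_0,\dots,q_l=w\}$, which is a path joining $u$ and $w$ with edge set $\hat r=\hat p\cup\hat q$.

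The second step is the weight comparison. By definition $w(r)=\sum_{e\in\hat p\cup\hat q}w(e)$, and since the edge weights in question are nonnegative this is at most $\sum_{e\in\hat p}w(e)+\sum_{e\in\hat q}w(e)=w(p)+w(q)$. Combining this with $d_G(u,w)\le w(r)$ and $w(p)+w(q)=d_G(u,v)+d_G(v,w)$ gives the asserted inequality. (If one wanted to allow weights for which the infimum is not attained, the same computation run with $\varepsilon$-minimisers followed by $\varepsilon\downarrow 0$ would do.)

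For the statement about the restricted first-passage times $l_{m\to n}$, the only additional ingredient is bookkeeping of which vertices the competing paths may use. Recall that $l_{m\to n}$ is the first-passage time from $(m,0)$ to $(n,0)$ along paths all of whose vertices have first coordinate in $\{m,\dots,n\}$, i.e. the first-passage distance in the subgraph induced on $\{m,\dots,n\}\times\{0,1\}$. Running the concatenation above on a shortest such path from $(0,0)$ to $(m,0)$ and a shortest such path from $(m,0)$ to $(n,0)$ produces a path from $(0,0)$ to $(n,0)$ whose vertices all have first coordinate in $\{0,\dots,m\}\cup\{m,\dots,n\}=\{0,\dots,n\}$; this path is therefore admissible in the definition of $l_{0\to n}$, and its weight is at most $l_{0\to m}+l_{m\to n}$, which yields $l_{0\to n}\le l_{0\to m}+l_{m\to n}$.

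I do not expect a genuine obstacle here. The one point that is not completely automatic is the inequality $\sum_{e\in\hat p\cup\hat q}w(e)\le\sum_{e\in\hat p}w(e)+\sum_{e\in\hat q}w(e)$: because the weight of a path is defined through its \emph{set} of edges, an edge traversed by both $p$ and $q$ is counted once on the left but twice on the right, so this step genuinely uses $w\ge 0$. It is the only place where nonnegativity of the weights (automatic for the exponential weights considered throughout this paper) enters, everything else being purely combinatorial.
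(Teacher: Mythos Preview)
Your proof is correct and follows the same concatenation argument as the paper's own proof. You are in fact more careful than the paper on two points: you track the restricted\hyp{}vertex constraint in the definition of $l_{m\to n}$ explicitly, and you observe that with the paper's set\hyp{}based definition of $w(p)$ the equality $w(r)=w(p)+w(q)$ claimed in the paper can fail when $\hat p\cap\hat q\neq\emptyset$, so that nonnegativity of the weights is genuinely needed for the inequality.
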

\begin{proof}
Subadditivity of $d_G$ is a consequence of the simple observation that the set of all paths joining $u$ and $w$ in $G$ is a superset of the set of all paths joining $u$ and $w$ in $G$ and containing $v$ and that one can combine any two paths $p^{u\to v}$ joining $u$ and $v$ and $p^{v\to w}$ joining $v$ and $w$ to get a path $p^{u\to w}$ joining $u$ and $w$ that satisfies $w\left(p^{u\to w}\right) = w\left(p^{u\to v}\right)+w\left(p^{v\to w}\right)$. The remark about the special case $G=G_n^E$ is clear. (Take $u=(0,0)$, $v=(m,0)$, $w=(n,0)$)
\end{proof}
To conclude from this subadditivity property that the asymptotic percolation rate is almost surely a deterministic number we need the following {\it subadditive ergodic theorem} due to Liggett, who generalized a result of Kingman. (See \citet[theorem 6.1]{durrett1991pta} for a proof.)
\begin{theorem}[Liggett]
\label{liggett}
Suppose a family of random variables $X=\{X_{m,n}:0\leq m\leq n\}$ satisfies
\begin{enumerate}[(i)]
\item $X_{0,n}\leq X_{0,m}+X_{m.n}$.
\item For each $k\in\mathbb{N}$, $(X_{nk,(n+1)k})_{n\geq 0}$ is a stationary sequence.
\item The distribution of $(X_{m,m+k})_{k\geq 0}$ does not depend on $m$.
\item $\mathbb{E}\left[X_{0,1}^+\right]<\infty$ and for each $n\in\mathbb{N}$, $\mathbb{E}\left[X_{0,n}\right]\geq\gamma_0 $ holds with $\gamma_0>-\infty$.
\end{enumerate}
Then $X=\lim_{n\to\infty}{X_{0,n}/n}$ exists almost surely.
\end{theorem}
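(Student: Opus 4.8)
The plan is to reproduce the standard proof of the subadditive ergodic theorem in Liggett's form, which I outline. The routine first step is integrability: iterating (i) together with (iii) gives the pointwise bound $X_{0,n}^{+}\le\sum_{k=0}^{n-1}X_{k,k+1}^{+}$, so $\mathbb{E}[X_{0,n}^{+}]\le n\,\mathbb{E}[X_{0,1}^{+}]<\infty$ by (iv), and with $\mathbb{E}[X_{0,n}]\ge\gamma_{0}>-\infty$ this puts $X_{0,n}$ in $L^{1}$ for every $n$. The means $a_{n}:=\mathbb{E}[X_{0,n}]$ then form a subadditive sequence ($a_{m+n}\le a_{m}+a_{n}$, again by (i) and (iii)), so Fekete's lemma gives $a_{n}/n\to\gamma:=\inf_{n}a_{n}/n$, with $\gamma\in(-\infty,\infty)$ since $a_{n}\ge\gamma_{0}$. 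One aims to show that $X_{0,n}/n$ converges almost surely --- and then automatically in $L^{1}$ --- to some integrable $X$ with $\mathbb{E}[X]=\gamma$; in Liggett's generality $X$ is a random variable, and only a further ergodicity (or tail-triviality) argument, indicated at the end, identifies it with the deterministic $\chi$ of \cref{prop-FPP-rate}.

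For the almost sure convergence I would follow the two-sided scheme. Write $\bar X=\limsup_{n}X_{0,n}/n$ and $\underline X=\liminf_{n}X_{0,n}/n$; the target is $\bar X=\underline X$ a.s. The \emph{upper half} is the more transparent: for fixed $m$, chopping $[0,n]$ into blocks of length $m$ and applying (i) gives $X_{0,n}\le\sum_{j=0}^{\lfloor n/m\rfloor-1}X_{jm,(j+1)m}+(\text{remainder})$, to which Birkhoff's ergodic theorem applies because the blocks $\bigl(X_{jm,(j+1)m}\bigr)_{j\ge0}$ form a stationary $L^{1}$ sequence by (ii); after checking that the remainder divided by $n$ tends to $0$ a.s.\ (its positive part from $X_{k,k+1}^{+}/k\to 0$ a.s., a Borel--Cantelli consequence of (iv) and (iii)), this yields $\bar X\le\tfrac{1}{m}\mathbb{E}[X_{0,m}\mid\mathcal I_{m}]$ a.s., hence $\mathbb{E}[\bar X]\le\inf_{m}a_{m}/m=\gamma$ after taking expectations and an infimum over $m$. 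The \emph{lower half}, $\mathbb{E}[\underline X]\ge\gamma$, is the substantive one: it is proved by a maximal-inequality / covering argument in the spirit of the proof of the ergodic theorem --- were $\underline X$ below $\gamma-\varepsilon$ on a set of positive probability, a stopping-time decomposition of $[0,n]$ would force $a_{n}/n<\gamma-\varepsilon'$ for large $n$, contradicting $\gamma=\inf_{n}a_{n}/n$. Granting both halves, $\mathbb{E}[\bar X]\le\gamma\le\mathbb{E}[\underline X]$ while $\underline X\le\bar X$ pointwise, so $\bar X=\underline X$ a.s.; uniform integrability of $\{X_{0,n}/n\}$, again via $X_{0,n}^{+}\le\sum_{k<n}X_{k,k+1}^{+}$ and truncation, then delivers the $L^{1}$ convergence and $\mathbb{E}[X]=\gamma$.

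The hard part is the \emph{lower half} above, together with the careful handling, woven through the whole argument, of the remainder terms and of potential $\infty-\infty$ cancellations --- this is precisely where Liggett's hypotheses (ii)--(iii), weaker than the full stationarity of the array used in Kingman's original version, must be made to suffice; I would lift this step from \citet[theorem 6.1]{durrett1991pta} rather than re-derive it. Two remarks tailored to the present paper. First, the arrays we use are $X_{m,n}=l_{m\to n}$ with \emph{positive} edge weights, so $X_{0,n}/n\ge0$: then all remainders are nonnegative, $0\le X_{qm,n}\le\sum_{k=qm}^{n-1}X_{k,k+1}$ is immediate, Fatou applies to the liminf with no lower envelope needed, and the maximal-inequality step simplifies considerably --- so the full strength of Liggett's theorem is a convenience, not a necessity, for what follows. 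Second, Liggett's conclusion does not assert that $X$ is constant; the constancy required in \cref{prop-FPP-rate} follows because altering finitely many of the independent weights changes $l_{n}$ by a bounded amount, so $\lim_{n}l_{n}/n$ is measurable with respect to the tail $\sigma$-field of the weight family, hence almost surely constant by Kolmogorov's zero--one law.
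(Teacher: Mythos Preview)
The paper does not supply its own proof of this theorem; it simply cites \citet[theorem~6.1]{durrett1991pta}. Your outline is the standard Durrett/Liggett argument and is correct, and since you explicitly defer the hard lower-half step to that same reference, your proposal is entirely in line with what the paper does --- your closing remarks on positivity of the $l_{m\to n}$ and on constancy via Kolmogorov's zero--one law also match the paper's subsequent proof of \cref{prop-FPP-rate}.
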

In the formulation of {\it (iv)} we used, as we will often do in the following, the notation $x^+$ as a short-hand for $\max\{0,x\}$. We can now give a proof of the assertion that the asymptotic percolation rate almost surely equals a non\hyp{}random number.
\begin{proof}[Proof of \cref{prop-FPP-rate}]
We will first argue that the family $\{X_{m,n}=l_{m\to n}:0\leq m\leq n\}$ satisfies the conditions of \cref{liggett}. In \cref{subadd} we have seen that (i) holds. (ii) and (iii) follow from the translational invariance of the graph and the fact that $\{w(e),e\in E\}$ is i.i.d. The last condition, (iv), holds by assumption and in particular for the exponential distribution. We can thus conclude that $l_n/n\to\chi$ almost surely as $n\to\infty$. To see that $\chi$ is indeed constant, we enumerate the edges $\{e\in E\}$ in some way, say $e_1, e_2,\dots$. Since $\chi$ is defined as $\lim_{n\to\infty}{l_n/n}$ and each $l_n$ is a sum of only a finite number of edge\hyp{}weights, namely of a subset of those contained in $G_n^{\mathcal{E}}$, $\chi$ is measurable with respect to $\bigcap_{n\geq 1}{\sigma\left(w(e_n),w(e_{n+1}),\ldots\right)}$, the tail-$\sigma$-algebra of the i.i.d. sequence $w(e_1), w(e_2),\dots$. Kolmogorov's Zero\hyp{}One law (see for example \citet[Theorem 3.12]{breiman1968p}) then implies that for any Borel set $B$ the probability that $\chi$ takes a value in $B$ is either $0$ or $1$ which means that $\chi$ is almost surely equal to a deterministic number.
\end{proof}
In view of our next theorem, which provides an explicit formula for the percolation rate $\chi$, we could have done without \cref{prop-FPP-rate} and without invoking Kingman's general subadditivity result. We chose to include it, however, in order to distinguish this universal aspect of first\hyp{}passage percolation theory from properties specific to our model. The result is the following: 
\begin{theorem}
\label{FPP-rate-formula}
Let $\Lambda_n\coloneqq l_n-l_{n-1}$ and assume there exists an $S$\hyp{}valued	 ergodic Markov chain $(M_n)_{n\geq 1}$ and a measurable function $f:S\to\mathbb{R}$ satisfying $\Lambda_n=f(M_n)$ for all positive integers $n$. Then, almost surely,
\begin{equation}
\chi=\lim_{n\to\infty}{\frac{l_n}{n}}=\int_S{f(s)\pi(s)},
\end{equation}
where $\pi$ is the unique invariant distribution of $(M_n)_{n\geq 1}$. Put differently, $\chi=\mathbb{E}\left[\Lambda\right]$, where $\Lambda$ is the weak limit of the sequence $(\Lambda_n)_{n\geq 1}$, i.e. $\Lambda_n\convd\Lambda$.
\end{theorem}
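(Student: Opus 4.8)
The plan is to read off the formula from the telescoping identity for $l_n$ combined with the pointwise ergodic theorem for the Markov chain $(M_n)_{n\ge1}$, and then to match the limit so obtained against the constant $\chi$ already supplied by \cref{prop-FPP-rate}.

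First note that $l_0=l_{0\to0}=0$, the trivial one\hyp{}vertex path from $(0,0)$ to itself carrying weight $0$. Hence, for every $n$,
\begin{equation*}
l_n=\sum_{k=1}^n(l_k-l_{k-1})=\sum_{k=1}^n\Lambda_k=\sum_{k=1}^n f(M_k),
\end{equation*}
so that $l_n/n=\tfrac1n\sum_{k=1}^n f(M_k)$. The ergodic theorem for the (ergodic, hence positive recurrent) chain $(M_n)$ with unique stationary law $\pi$ gives $\tfrac1n\sum_{k=1}^n g(M_k)\to\int_S g\,d\pi$ almost surely, for any initial distribution, when $g\in L^1(\pi)$, and with limit $\int_S g\,d\pi\in[0,\infty]$ when $g\ge0$. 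To see that $f$ does lie in $L^1(\pi)$ — this is not among the hypotheses — I would use \cref{subadd}: $\Lambda_k=l_k-l_{k-1}\le l_{(k-1)\to k}$, and by translation invariance of the edge structure and the i.i.d.\ weights $\mathbb{E}[l_{(k-1)\to k}]=\mathbb{E}[l_{0\to1}]=:c<\infty$ (a sum of finitely many unit exponentials) for every $k$. Thus $\mathbb{E}\big[\tfrac1n\sum_{k=1}^n f^+(M_k)\big]=\tfrac1n\sum_{k=1}^n\mathbb{E}[\Lambda_k^+]\le c$, and Fatou's lemma converts the a.s.\ limit $\tfrac1n\sum_{k=1}^n f^+(M_k)\to\int_S f^+\,d\pi$ into $\int_S f^+\,d\pi\le c<\infty$. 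Since moreover $l_n/n=\tfrac1n\sum_{k=1}^n f(M_k)\to\chi$ is finite by \cref{prop-FPP-rate}, the difference $\tfrac1n\sum_{k=1}^n f^-(M_k)=\tfrac1n\sum_{k=1}^n f^+(M_k)-l_n/n$ converges to the finite number $\int_S f^+\,d\pi-\chi$, which by the $[0,\infty]$\hyp{}valued ergodic theorem must equal $\int_S f^-\,d\pi$; hence $f\in L^1(\pi)$. Combining the pieces, $\chi=\lim_n l_n/n=\int_S f^+\,d\pi-\int_S f^-\,d\pi=\int_S f\,d\pi$, which is the asserted identity.

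For the reformulation $\chi=\mathbb{E}[\Lambda]$: since $(M_n)$ is ergodic (which I take to include aperiodicity), $\mathcal{L}(M_n)\to\pi$ in total variation as $n\to\infty$, whatever the initial law; because total\hyp{}variation distance does not increase under the measurable push\hyp{}forward by $f$, this yields $\Lambda_n=f(M_n)\convd\Lambda$ with $\Lambda$ distributed as the image measure $f_\ast\pi$, and then $\mathbb{E}[\Lambda]=\int_S f\,d\pi=\chi$. The one delicate point is the integrability bookkeeping in the previous paragraph: $f$ is merely measurable, so $f\in L^1(\pi)$ must be established en route, and one has to invoke the version of the Markov\hyp{}chain ergodic theorem valid for an arbitrary (non\hyp{}stationary) starting distribution, since the chain produced by the first\hyp{}passage construction need not begin in equilibrium. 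Once the link $\Lambda_n=f(M_n)$ and the normalization $l_0=0$ are in place, the remainder is formal.
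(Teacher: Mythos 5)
Your proposal is correct and follows essentially the same route as the paper: telescope $l_n=\sum_{k=1}^n\Lambda_k$ with $\Lambda_k=f(M_k)$ and apply the ergodic theorem for the chain $(M_n)_{n\ge1}$ to identify the limit of $l_n/n$ with $\int_S f\,d\pi=\mathbb{E}[\Lambda]$. The only difference is that you additionally verify $f\in L^1(\pi)$ (via \cref{subadd}, Fatou and the finiteness of $\chi$ from \cref{prop-FPP-rate}) and the use of the ergodic theorem from a non-stationary start, points the paper's three-line proof passes over silently.
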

\begin{remark}
It will be established in \cref{HMM} that there indeed exists a Markov chain $(M_n)_{n\geq 1}$ and a function $f$ satisfying the conditions of the theorem.
\end{remark}
\begin{proof}
As an instantaneous function of the ergodic Markov chain $(M_n)_{n\geq 1}$ the sequence $(\Lambda_n)_{n\geq 1}$ is ergodic as well. It follows that the time average $\frac{1}{n}\sum_{\nu=1}^n\Lambda_\nu$ converges to $\mathbb{E}\left[\Lambda\right]$. Since clearly $l_n=\sum_{\nu=1}^n\Lambda_n$ the claim follows.
\end{proof}

\section{Calculations for \texorpdfstring{$\mathcal{G}^{\{\mathcal{X},\mathcal{Y},\mathcal{Z}\}}$}{the ladder case}}
\label{section-FPP}
In this section we prove our results about the time constants for first\hyp{}passage percolation on width\hyp{}two stretches (\cref{tabl:graphs-2}) in the case of $\mathcal{E}=\{\mathcal{X},\mathcal{Y},\mathcal{Z}\}$. The calculations for the other two cases mentioned are completely analogous and can be found in \citet{schlemm2008fpp}. We denote the length of the shortest path from $(0,0)$ to $(n,0)$ by $l_n$, the length of the shortest path from $(0,0)$ to $(n,1)$ by $l_n'$ and we let $\Delta_n=l_n'-l_n$. Our first goal is to find a recurrence relation between the distributions of $\Delta_n$ and $\Delta_{n-1}$.
\begin{proposition}
\label{TransKern-1}
The sequence of random variables $(\Delta_n)_{n\geq 0}$ is a real\hyp{}valued Markov chain with initial distribution $\mathbb{P}\left(\Delta_0\leq d\right)=1-e^{-d}$ and transition kernel
\begin{equation}
\label{transkernelK}
K(\delta,d)= e^{-|d|}\begin{cases}
1                   & d< 0      \wedge \delta \leq d,\\
e^{-(\delta-d)}  & d< 0      \wedge \delta> d,\\
e^{-|\delta|}    & d=0,\\
e^{-(d-\delta)}  & d>0       \wedge \delta\leq d,\\
1                   & d>0       \wedge \delta>d.
\end{cases}
\end{equation}
\end{proposition}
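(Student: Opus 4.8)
The plan is to reduce everything to a one\hyp{}step recursion for the pair $(l_n,l_n')$ in terms of $(l_{n-1},l_{n-1}')$ and of the three edge weights $X_{n-1},Y_{n-1},Z_n$ that are added when passing from $G^{\{\mathcal{X},\mathcal{Y},\mathcal{Z}\}}_{n-1}$ to $G^{\{\mathcal{X},\mathcal{Y},\mathcal{Z}\}}_n$, then to read off the dynamics of $\Delta_n$, and finally to compute the transition kernel by an elementary integration. The key structural fact is the recursion
\begin{equation*}
l_n=\min\{l_{n-1}+X_{n-1},\,l_{n-1}'+Y_{n-1}+Z_n\},\qquad l_n'=\min\{l_{n-1}'+Y_{n-1},\,l_{n-1}+X_{n-1}+Z_n\}.
\end{equation*}
The inequality ``$\leq$'' is clear, since the right\hyp{}hand sides are weights of genuine paths (concatenate a shortest path in $G_{n-1}$ to $(n-1,0)$, resp.\ $(n-1,1)$, with the new edges). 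For ``$\geq$'' I would use that the set of edges of $G_n$ joining a vertex of $G_{n-1}$ to a vertex in column $n$ is exactly $\{X_{n-1},Y_{n-1}\}$, so a shortest path from $(0,0)$ to a vertex in column $n$, which is necessarily simple, crosses this cut an odd number of times but at most twice, hence exactly once. Splitting according to which of $X_{n-1},Y_{n-1}$ is used, and noting that the initial portion of the path up to the crossing stays in $G_{n-1}$ and therefore has weight at least $l_{n-1}$ or $l_{n-1}'$, gives the matching lower bound in each case; a path ending at $(n,0)$ that uses $Y_{n-1}$ (resp.\ ending at $(n,1)$ that uses $X_{n-1}$) must in addition traverse the rung $Z_n$.

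Writing $D\coloneqq(l_{n-1}'+Y_{n-1})-(l_{n-1}+X_{n-1})=\Delta_{n-1}+Y_{n-1}-X_{n-1}$, the recursion yields
\begin{equation*}
\Delta_n=l_n'-l_n=\min\{D,Z_n\}-\min\{0,D+Z_n\},
\end{equation*}
so $\Delta_n$ is a fixed measurable function of $\Delta_{n-1}$ and of $(X_{n-1},Y_{n-1},Z_n)$. Since all edge weights are independent, $(X_{n-1},Y_{n-1},Z_n)$ is independent of the $\sigma$\hyp{}algebra generated by the edge weights of $G_{n-1}$, which contains $\sigma(\Delta_0,\dots,\Delta_{n-1})$ because each $l_m,l_m'$ with $m\leq n-1$ is a function of those weights. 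A routine conditioning argument then shows that $(\Delta_n)_{n\geq 0}$ is a Markov chain whose transition kernel $K(\delta,\cdot)$ is the law of $\min\{D,Z\}-\min\{0,D+Z\}$ with $D=\delta+Y-X$ and $X,Y,Z$ i.i.d.\ $\mathrm{Exp}(1)$. For the initial distribution one computes directly: $l_0=0$ and $l_0'=Z_0$, hence $\Delta_0=Z_0$ and $\mathbb{P}(\Delta_0\leq d)=1-e^{-d}$.

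It remains to identify $K$. Since $Z>0$ almost surely, a short case analysis gives $\min\{D,Z\}-\min\{0,D+Z\}=\min\{D,Z\}$ when $D\geq 0$, $=D$ when $D<0$ and $Z\geq-D$, and $=-Z$ when $D<0$ and $Z<-D$; in particular the result has the same sign as $D$. As $Y-X$ has the Laplace density $\tfrac12 e^{-|u|}$, the variable $D=\delta+Y-X$ has density $\tfrac12 e^{-|d-\delta|}$, and integrating the contributions of these three regimes against the densities of $D$ and of $Z\sim\mathrm{Exp}(1)$ produces, after simplifying the absolute values, the piecewise expression claimed for $K(\delta,d)$. The symmetry of $G^{\{\mathcal{X},\mathcal{Y},\mathcal{Z}\}}_n$ under exchanging the two rows, which swaps $l_n$ and $l_n'$ and hence sends $\Delta_n\mapsto-\Delta_n$ and $(X,Y)\mapsto(Y,X)$, shows $K(\delta,d)=K(-\delta,-d)$, so it suffices to carry out the computation for $d<0$. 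I expect the structural recursion, specifically the ``$\geq$'' direction proved via the cut argument, to be the conceptual heart of the proof. The kernel computation is routine; the only mild care needed is with the absolute values and with the atom of $\min\{D,Z\}$ at $D$, which is smeared out by the continuous law of $D$, so that $K(\delta,\cdot)$ is genuinely a probability density.
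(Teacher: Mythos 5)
Your proposal is correct and takes essentially the same route as the paper: the same one-step recursion for $(l_n,l_n')$, giving $\Delta_n=\min\{\Delta_{n-1}+Y,X+Z\}-\min\{X,\Delta_{n-1}+Y+Z\}$ (the paper's equation \eqref{DeltaRec}), Markovianity from independence of the newly added weights, and then an integration over those weights to identify the kernel, which indeed reproduces \eqref{transkernelK}. The differences are only organizational: you justify the recursion by an explicit cut argument where the paper argues via the last edge of the almost surely unique shortest paths, and you integrate out $Y-X$ through its Laplace density to get the conditional density directly, whereas the paper computes the conditional CDF via the three events $G_1,G_2,G_3$ and differentiates.
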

\begin{proof}
It is clear that we must try to express $\rho_n$, the probability density function of $\Delta_n$, in terms of $\rho_{n-1}$, the density function of $\Delta_{n-1}$. To achieve this we define events on which $\Delta_n$ is a deterministic function of $X_n$, $Y_n$, $Z_n$ and $\Delta_{n-1}$, which amounts to separately taking into account the different possible behaviours of the last step of the shortest path. Consider first the shortest path $p^{0\to 1}$ from $(0,0)$ to $(n,1)$, the last edge of which must be either $Y_n$ or $Z_n$. Similarly, the last edge of $p^{0\to 0}$, the shortest path from $(0,0)$ to $(n,0)$, must be either $X_n$ or $Z_n$. We note that almost surely, the edge $Z_n$ is not part of both $p^{0\to 1}$ and $p^{0\to 0}$ because this would contradict the uniqueness property mentioned in the introduction, so that we have to consider three cases:
\begin{subequations}
\label{events}
\begin{align}
\label{event1}\includegraphics[scale=1]{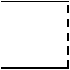}=&\left\{Y_n+l_{n-1}'\leq Z_n+X_n+l_{n-1}\wedge X_n+l_{n-1}\leq Z_n+Y_n+l_{n-1}'\right\}\\
=&\left\{Z_n\geq|X_n-Y_n-\Delta_{n-1}|\right\}\notag \\
\label{event2}\includegraphics[scale=1]{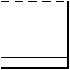}=&\left\{Z_n+X_n+l_{n-1}\leq Y_n+l_{n-1}'\wedge X_n+l_{n-1}\leq Z_n+Y_n+l_{n-1}'\right\}\\
=&\left\{Y_n\geq X_n+Z_n-\Delta_{n-1}\right\}\notag\\
\label{event3}\includegraphics[scale=1]{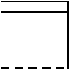}=&\left\{Y_n+l_{n-1}'\leq Z_n+X_n+l_{n-1}\wedge Z_n+Y_n+l_{n-1}'\leq X_n+l_{n-1}\right\}\\
=&\left\{X_n\geq Y_n+Z_n+\Delta_{n-1}\right\}\notag
\end{align}
\end{subequations}
In particular we have the relation
\begin{equation}
\label{DeltaRec}
\Delta_n=\min\{\Delta_{n-1}+Y_n,X_n+Z_n\}-\min\{X_n,\Delta_{n-1}+Y_n+Z_n\}.
\end{equation}
The reason why the pictographs above have been chosen to represent the different events is the following: solid lines correspond to edges being used by the shortest path to either $(n,0)$ or $(n,1)$, double solid lines represent edges being part of both these paths, while dashed lines stand for unused edges. Using these events we now compute the cumulative distribution function of $\Delta_n$ as
\begin{equation}
\label{DeltaCDf-1}
\mathbb{P}\left(\Delta_n\leq d\right)=\mathbb{P}\left(\left\{\Delta_n\leq d\right\}\cap\includegraphics[scale=0.5]{quad1.pdf}\right)+\mathbb{P}\left(\left\{\Delta_n\leq d\right\}\cap\includegraphics[scale=0.5]{quad2.pdf}\right)+\mathbb{P}\left(\left\{\Delta_n\leq d\right\}\cap\includegraphics[scale=0.5]{quad3.pdf}\right).
\end{equation}
On the event \includegraphics[scale=0.5]{quad1.pdf}, $\Delta_n$ is equal to $\Delta_{n-1}+Y_n-X_n$, so the first term is
\begin{equation*}
\mathbb{P}\left(\Delta_n\leq d\cap \includegraphics[scale=0.5]{quad1.pdf}\right)= \int_\mathbb{R}{d\delta\,\rho_{n-1}(\delta)\int_{\mathbb{R}^3}{d\mathbb{P}^3(x,y,z)\,\mathbb{I}_{\{z\geq|x-y-\delta|\}}\mathbb{I}_{\{\delta+y-x\leq d\}}}}.
\end{equation*}
Rewriting the indicator functions as integration bounds we obtain
\begin{equation*}
\int_{\mathbb{R}}{d\delta\,\rho_{n-1}(\delta)\int_{0}^{\infty}{d\mathbb{P}(y)\int_{(\delta+y-d)^+}^{\infty}{d\mathbb{P}(x)\int_{|x-y-\delta|}^{\infty}{d\mathbb{P}(z)}}}}
\end{equation*}
and after doing the tedious but easy $x$-, $y$- and $z$-integrals we arrive at
\begin{equation*}
\mathbb{P}\left(\Delta_n\leq d\cap \includegraphics[scale=0.5]{quad1.pdf}\right)=\int_\mathbb{R}{d\delta\,\rho_{n-1}(\delta)G_1(\delta,d)}
\end{equation*}
with
\begin{equation*}
G_1(\delta,d)=\frac{1}{4}\begin{cases}
e^{\delta}(1+2(d-\delta))                     & d<0           \wedge \delta\leq d,\\
e^{2(d-\delta)}                                  & d<0           \wedge \delta> d,\\
e^{\delta}(2-2\delta-e^{-2d})              & d\geq0        \wedge \delta\leq 0,\\
e^{-\delta}(2+2\delta-e^{-2(d-\delta)})    & d\geq0        \wedge 0<\delta\leq d,\\
e^{-\delta}(1+2\delta)                        & d\geq0        \wedge\delta> d.
\end{cases}
\end{equation*}
One can also confirm this computation and similar ones which follow by use of a computer algebra system. In fact, we used  Mathematica to verify our results. On the event \includegraphics[scale=0.5]{quad2.pdf}, $\Delta_n$ is given by $Z_n$, so for the second term in \eqref{DeltaCDf-1} we obtain:
\begin{align*}
\mathbb{P}\left(\Delta_n\leq d\cap \includegraphics[scale=0.5]{quad2.pdf}\right) =& \int_\mathbb{R}{d\delta\rho_{n-1}(\delta)\int_{\mathbb{R}^3}{d\mathbb{P}^3(x,y,z)\,\mathbb{I}_{\{y\geq x+z-\delta\}}\mathbb{I}_{\{z\leq d\}}}} \\						 =&\int_{\mathbb{R}}{d\delta\,\rho_{n-1}(\delta)\int_{0}^{\infty}{d\mathbb{P}(x)\int_0^{d^+}}{d\mathbb{P}(z)\int_{(x+z-\delta)^+}^{\infty}{d\mathbb{P}(y)}}}\\
\eqqcolon&\int_\mathbb{R}{d\delta\,\rho_{n-1}(\delta)G_2(\delta,d)},
\end{align*}
where the function $G_2$ is given by
\begin{equation*}
G_2(\delta,d)=\frac{1}{4}\mathbb{I}_{\{d\geq0\}}\begin{cases}
e^{\delta}(1-e^{-2d})                                     & \delta\leq 0,\\
4-e^{-\delta}\left(3+e^{-2(d-\delta)}+2\delta\right)   & 0<\delta\leq d,\\
4-4e^{- d}-2 d e^{-\delta}                             & \delta> d.
\end{cases}
\end{equation*}
Finally, on the event \includegraphics[scale=0.5]{quad3.pdf}, $\Delta_n$ is equal to $-Z_n$, so we compute for the third term in \eqref{DeltaCDf-1}
\begin{align*}
\mathbb{P}\left(\Delta_n\leq d\cap \includegraphics[scale=0.5]{quad3.pdf}\right) =& \int_\mathbb{R}{d\delta\rho_{n-1}(\delta)\int_{\mathbb{R}^3}{d\mathbb{P}^3(x,y,z)\,\mathbb{I}_{\{x\geq y+z+\delta\}}\mathbb{I}_{\{z\leq -d\}}}} \\ =&\int_{\mathbb{R}}{d\delta\,\rho_{-1}n(\delta)\int_{0}^{\infty}{d\mathbb{P}(y)\int_{(-d)+}^\infty{d\mathbb{P}(z)\int_{(y+z+\delta)^+}^{\infty}{d\mathbb{P}(x)}}}}\\
\eqqcolon&\int_\mathbb{R}{d\delta\,\rho_{n-1}(\delta)G_3(\delta,d)}
\end{align*}
and
\begin{equation*}
G_3(\delta,d)=\frac{1}{4}\begin{cases}
4e^{\delta}+e^{\delta}(-2(d-\delta)-3)     & d<0           \wedge \delta \leq d,\\
e^{-2(\delta-d)}                                 & d<0           \wedge \delta> d,\\
4+e^{\delta}(2\delta-3)                       & d\geq0        \wedge \delta\leq 0,\\
e^{-\delta}                                      & d\geq0        \wedge \delta>0.
\end{cases}
\end{equation*}
It is interesting to note that, due to the symmetry of the events \eqref{event2} and \eqref{event3}, the sum of $G_2(\delta,d)$ and $G_3(-\delta,-d)$ does not depend on $d$; explicitly it holds that
\begin{equation*}
G_2(\delta,d)+G_3(-\delta,-d)=\frac{1}{4}e^{-|\delta|}\begin{cases}
                                      1 & \delta \leq 0\\
                                      4 e^\delta-2\delta-3 & \delta>0
				      \end{cases}.
\end{equation*}
Putting everything together it follows that
\begin{equation*}
\mathbb{P}\left(\Delta_n\leq d\right)=\int_{\mathbb{R}}{d\delta\,\rho_{n-1}(\delta)\left[G_1(\delta,d)+G_2(\delta,d)+G_3(\delta,d)\right]}.
\end{equation*}
Differentiating this equation with respect to $d$ and interchanging differentiation and integration on the right hand side we obtain
\begin{equation*}
\rho_n(\delta)=\int_{\mathbb{R}}{d\delta\,\rho_{n-1}(\delta)\partial_d\left[G_1[\delta,d]+G_2(\delta,d)+G_3(\delta,d)\right]},
\end{equation*}
so the transition kernel $K:\mathbb{R}\to\mathbb{R}$ is given by $K(\delta,d)=\partial_d\left[G_1[\delta,d]+G_2(\delta,d)+G_3(\delta,d)\right]$ and the claim follows from basic computations.
\end{proof}
From the explicit description of the transition kernel $K(\delta,d)$ we can deduce useful properties of $(\Delta_n)_{n\geq 0}$:
\begin{lemma}
\label{DeltaConv}
The Markov chain $(\Delta_n)_{n\geq 0}$ is ergodic. In particular, as $n\to\infty$, $(\Delta_n)_{n\geq 0}$ converges in distribution to a non\hyp{}degenerate limiting random variable $\Delta$ with probability density function $\rho_\infty=\lim_{n\to\infty}{\rho_n}$.
\end{lemma}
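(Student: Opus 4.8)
The plan is to prove ergodicity by establishing a uniform (Doeblin) minorization for the transition kernel \eqref{transkernelK}, from which the classical convergence theorem for such chains delivers a unique stationary law $\pi$ together with total\hyp{}variation convergence of $\mathrm{Law}(\Delta_n)$ to it. Two facts are needed: (a) the chain cannot escape to $\pm\infty$, in a quantitatively uniform way, and (b) the one\hyp{}step density is bounded below near the origin, uniformly over starting points in a fixed bounded set. For (a) I would start from the recursion \eqref{DeltaRec}: writing $P=\Delta_{n-1}+Y_n$, $Q=X_n$ and $R=Z_n\ge 0$, we have $\Delta_n=\min\{P,Q+R\}-\min\{Q,P+R\}$, and a short case analysis (distinguishing $P\le Q$ from $P>Q$, and then whether $P+R\le Q$) yields the pointwise bound $\lvert\Delta_n\rvert\le R=Z_n$, valid no matter how large $\lvert\Delta_{n-1}\rvert$ is. Since $Z_n$ is independent of $\Delta_{n-1}$ and $\mathbb{P}(Z_n\le 1)=1-e^{-1}$, this gives $\mathbb{P}(\Delta_n\in[-1,1]\mid\Delta_{n-1}=\delta)\ge 1-e^{-1}$ for every $\delta\in\mathbb{R}$.

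For (b), a branch\hyp{}by\hyp{}branch inspection of the closed form in \cref{TransKern-1} shows that for $\delta\in[-1,1]$ and Lebesgue\hyp{}almost every $d\in(-1,1)$ the exponent occurring in each of the five cases of \eqref{transkernelK} stays bounded, so that $K(\delta,d)\ge\kappa$ for a suitable constant $\kappa>0$; for instance on the branch $0<d<1$, $\delta\le d$ one has $K(\delta,d)=e^{\delta-2d}\ge e^{-3}$, and the other branches are handled the same way. Combining (a) and (b), for every $\delta\in\mathbb{R}$ and every Borel set $A$,
\begin{align*}
K^2(\delta,A)&\ge\int_{[-1,1]}K(\delta,\mathrm{d}\delta_1)\,K(\delta_1,A)\ge\kappa\,\lvert A\cap(-1,1)\rvert\,K(\delta,[-1,1])\\
&\ge\kappa\,(1-e^{-1})\,\lvert A\cap(-1,1)\rvert,
\end{align*}
which is Doeblin's condition for the two\hyp{}step chain, with minorizing measure proportional to Lebesgue measure on $(-1,1)$.

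From here the conclusions are standard. Doeblin's theorem gives a unique stationary distribution $\pi$ and geometric total\hyp{}variation convergence $\mathrm{Law}(\Delta_n)\to\pi$ from any initial law, which is precisely the asserted ergodicity and convergence in distribution, with $\Delta\sim\pi$. Non\hyp{}degeneracy follows because $K(\delta,\{d_0\})=0$ for all $\delta,d_0$, so the stationarity relation $\pi(\{d_0\})=\int_{\mathbb{R}}K(\delta,\{d_0\})\,\pi(\mathrm{d}\delta)=0$ shows $\pi$ is atomless; in fact $\pi$ is absolutely continuous with density $\rho_\infty(d)=\int_{\mathbb{R}}K(\delta,d)\,\pi(\mathrm{d}\delta)$, and total\hyp{}variation convergence of the laws is $L^1$\hyp{}convergence of the densities, hence $\rho_n\to\rho_\infty$.

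The one genuinely fiddly step is the uniform lower bound $\kappa$ in (b), which requires going through all five cases of \eqref{transkernelK}; but this is routine given the explicit formula, and everything else is either a one\hyp{}line estimate or an invocation of classical Markov chain theory. As an alternative to the Doeblin route, the same pointwise bound $\lvert\Delta_n\rvert\le Z_n$ gives $\mathbb{E}\bigl[\,1+\lvert\Delta_n\rvert\mid\Delta_{n-1}\,\bigr]\le 2$, a Foster--Lyapunov drift condition for the test function $V(\delta)=1+\lvert\delta\rvert$ towards the small set $[-1,1]$, from which ergodicity follows by the Meyn--Tweedie criteria; I would mention this only as a remark, since the Doeblin argument above is self\hyp{}contained.
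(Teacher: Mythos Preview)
Your argument is correct. The pointwise bound $|\Delta_n|\le Z_n$ from the recursion \eqref{DeltaRec} is checked correctly by the four cases, the explicit lower bound $K(\delta,d)\ge e^{-3}$ on $[-1,1]\times((-1,1)\setminus\{0\})$ follows from inspecting \eqref{transkernelK} as you indicate, and the two combine into a clean two\hyp{}step Doeblin minorization from which everything else is standard.

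The paper, however, does not do any of this. Its proof is a single sentence: since $K$ is strictly positive and continuous, the conclusion follows from Jentzsch's infinite\hyp{}dimensional Perron--Frobenius theorem for integral operators with positive kernel. Compared to that, your approach is more elementary and more quantitative: it uses only textbook Markov chain theory rather than a somewhat specialized operator\hyp{}theoretic result, it yields geometric convergence in total variation (not merely existence of a stationary law), and it makes the absolute continuity and non\hyp{}degeneracy of $\pi$ explicit rather than implicit. The paper's route is far shorter but arguably skates over the hypotheses of Jentzsch's theorem, which in its classical form is stated for kernels on bounded intervals; your Doeblin argument sidesteps that issue entirely. The observation $|\Delta_n|\le Z_n$ is also of independent interest, since it shows the chain enters $[-1,1]$ with probability at least $1-e^{-1}$ from \emph{any} state in a single step---a stronger uniform recurrence statement than what the paper's proof delivers.
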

\label{DeltaDistConv}
\begin{proof}
It is enough to note that the kernel function $K$ from \cref{TransKern-1} is strictly positive and continuous. The claim therefore follows from an extension of the Perron-Frobenius theorem to continuous transfer operators \citep[see][]{jentzsch1912uip}.
\end{proof}
\begin{corollary}
\label{StatDistIntEq-1}
Let $\rho_\infty=\lim_{n\to\infty}{\rho_n}$. Then
\begin{enumerate}[(i)]
\item The density $\rho_\infty$ of the stationary distribution of $(\Delta_n)_{n\geq 0}$ satisfies the integral equation
\begin{subequations}
\label{inteq}
\begin{align}
\label{inteqpos}\rho_\infty(d)=& e^{ d}\int_{-\infty}^d{d\delta\,\rho_\infty(\delta)}+ e^{2 d}\int_{d}^{\infty}{d\delta\,\rho_\infty(\delta)e^{-\delta}} & d<0\\
\label{inteqneg}\rho_\infty(d)=&e^{-2 d}\int_{-\infty}^d{d\delta\,\rho_\infty(\delta)e^{\delta}}+ e^{- d}\int_{d}^{\infty}{d\delta\,\rho_\infty(\delta)} & d\geq0.
\end{align}
\end{subequations}
\item The density $\rho_\infty$ is an even function, that is $\rho_\infty(d)=\rho_\infty(-d)$ for all $d\in\mathbb{R}$.
\end{enumerate}
\end{corollary}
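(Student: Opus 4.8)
The plan is to read off both assertions from the invariance of the stationary density under the one\hyp{}step transition kernel, together with the explicit formula \eqref{transkernelK}. By \cref{DeltaConv} the chain $(\Delta_n)_{n\geq0}$ is ergodic, so the limiting law, with density $\rho_\infty=\lim_n\rho_n$, is the unique invariant distribution; hence $\rho_\infty$ satisfies
\[
\rho_\infty(d)=\int_{\mathbb{R}}K(\delta,d)\,\rho_\infty(\delta)\,d\delta ,
\]
which one may also derive by letting $n\to\infty$ in the recursion $\rho_n(d)=\int_{\mathbb{R}}K(\delta,d)\rho_{n-1}(\delta)\,d\delta$ from the proof of \cref{TransKern-1}, the passage to the limit being legitimate because $0\le K\le1$ and $\rho_n\to\rho_\infty$. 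For part (i) it then only remains to substitute the five\hyp{}case description of $K$ in \eqref{transkernelK} and split the $\delta$\hyp{}integral at $\delta=d$. For $d<0$ one has $e^{-|d|}=e^{d}$, with $K(\delta,d)=e^{d}$ on $\{\delta\le d\}$ and $K(\delta,d)=e^{d}\cdot e^{-(\delta-d)}=e^{2d}e^{-\delta}$ on $\{\delta>d\}$, which reproduces \eqref{inteqpos}; for $d>0$ one has $e^{-|d|}=e^{-d}$, with $K(\delta,d)=e^{-d}\cdot e^{-(d-\delta)}=e^{-2d}e^{\delta}$ on $\{\delta\le d\}$ and $K(\delta,d)=e^{-d}$ on $\{\delta>d\}$, which reproduces \eqref{inteqneg}. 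The single value $d=0$ is a Lebesgue\hyp{}null set and is immaterial for an identity between densities; equivalently one may use that \eqref{inteqneg} matches the one\hyp{}sided limit $d\downarrow0$.

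For part (ii) the key observation is the reflection symmetry of the kernel,
\[
K(\delta,d)=K(-\delta,-d)\qquad(\delta,d\in\mathbb{R}),
\]
which is checked by going through the five cases of \eqref{transkernelK}: the regime $d<0$ is exchanged with the regime $d>0$, within each the two subcases are swapped, and the factors match ($e^{d}\leftrightarrow e^{-d}$ on the flat pieces, $e^{2d}e^{-\delta}\leftrightarrow e^{-2d}e^{\delta}$ on the others). Granting this, put $\sigma(d):=\rho_\infty(-d)$, which is again a probability density, and substitute $\delta\mapsto-\delta$ in the invariance equation:
\[
\int_{\mathbb{R}}K(\delta,d)\,\sigma(\delta)\,d\delta=\int_{\mathbb{R}}K(-\delta,d)\,\rho_\infty(\delta)\,d\delta=\int_{\mathbb{R}}K(\delta,-d)\,\rho_\infty(\delta)\,d\delta=\rho_\infty(-d)=\sigma(d).
\]
Thus $\sigma$ is also an invariant density of $(\Delta_n)_{n\geq0}$, and uniqueness of the invariant distribution (again \cref{DeltaConv}) forces $\sigma=\rho_\infty$ almost everywhere, i.e.\ $\rho_\infty$ is even. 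As a variant, one can bypass the kernel symmetry and check directly from \eqref{inteq} that $d\mapsto\rho_\infty(-d)$ solves the same system — the substitution $\delta\mapsto-\delta$ carries \eqref{inteqpos} at $-d$ into \eqref{inteqneg} for the reflected density, and conversely — and then conclude by uniqueness.

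I do not expect a real obstacle: the corollary follows essentially immediately from \cref{TransKern-1,DeltaConv}. The two points requiring a modicum of care are the bookkeeping of the integration ranges in the substitution for (i), and the case\hyp{}by\hyp{}case verification of the symmetry $K(\delta,d)=K(-\delta,-d)$ in (ii); the latter is the only slightly tedious step, but it is purely mechanical.
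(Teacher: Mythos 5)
Your proposal is correct and follows essentially the same route as the paper: the invariance equation $\rho_\infty(d)=\int_{\mathbb{R}}K(\delta,d)\rho_\infty(\delta)\,d\delta$ combined with the explicit form of $K$ from \eqref{transkernelK} gives (i), and the symmetry $K(\delta,d)=K(-\delta,-d)$ together with uniqueness of the invariant distribution gives (ii). You merely spell out the case-splitting and the uniqueness step that the paper leaves implicit.
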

\begin{proof}
For the first claim we observe that $\rho_\infty$ satisfies the integral equation $\rho_\infty(d)=\int_{\mathbb{R}}{d\delta\,\rho_\infty(\delta)K(\delta,d)}$ with the kernel $K$ given in \eqref{transkernelK}. The second assertion follows from the symmetry $K(\delta,d)=K(-\delta,-d)$.
\end{proof}
In order to solve this integral equation we transform it into a differential equation.
\begin{lemma}
\label{StatDistODE-1}
The density $\rho_\infty$ of the stationary distribution of $(\Delta_n)_{n\geq 0}$ satisfies the differential equation
\begin{subequations}
\label{eq:StatDistODE-1}
\begin{align}
\rho_\infty''(d) =& -\left[2+e^{ d}\right]\rho_\infty(d)+3\rho_\infty'(d) &d<0\label{eq:StatDistODE-1-neg},\\
\rho_\infty''(d) =& -\left[2+e^{- d}\right]\rho_\infty(d)-3\rho_\infty'(d) &d\geq0\label{eq:StatDistODE-1-pos}.
\end{align}
\end{subequations}
\end{lemma}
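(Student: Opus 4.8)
The plan is to convert the fixed\hyp{}point relation $\rho_\infty(d)=\int_{\mathbb{R}}\rho_\infty(\delta)K(\delta,d)\,d\delta$, written in the split form \eqref{inteq}, into a local differential identity by differentiating twice in $d$ and then eliminating the two integral terms that appear. Before differentiating I would record the regularity needed: by \cref{DeltaConv} the function $\rho_\infty$ is (represented by) a continuous eigenfunction of the integral operator with continuous kernel $K$, and for continuous $\rho_\infty$ the right\hyp{}hand sides of \eqref{inteqpos} and \eqref{inteqneg} are continuously differentiable in $d$ on each of the half\hyp{}lines $(-\infty,0)$ and $[0,\infty)$; a routine bootstrap then upgrades $\rho_\infty$ to a smooth function on each half\hyp{}line, which is all the statement requires, since the two ODEs are asserted on $d<0$ and $d\geq 0$ separately.

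For $d<0$ I would introduce the abbreviations $A(d)=\int_{-\infty}^{d}\rho_\infty(\delta)\,d\delta$ and $B(d)=\int_{d}^{\infty}\rho_\infty(\delta)e^{-\delta}\,d\delta$, so that \eqref{inteqpos} reads $\rho_\infty=e^{d}A+e^{2d}B$, with $A'=\rho_\infty$ and $B'=-e^{-d}\rho_\infty$. Differentiating once, the contributions of $A'$ and $B'$ cancel against each other, leaving $\rho_\infty'=e^{d}A+2e^{2d}B$; subtracting the original relation gives $e^{2d}B=\rho_\infty'-\rho_\infty$, and hence $e^{d}A=2\rho_\infty-\rho_\infty'$. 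Differentiating a second time yields $\rho_\infty''=e^{d}A-e^{d}\rho_\infty+4e^{2d}B$, and substituting the relations $e^{d}A=2\rho_\infty-\rho_\infty'$ and $e^{2d}B=\rho_\infty'-\rho_\infty$ produces exactly $\rho_\infty''=-(2+e^{d})\rho_\infty+3\rho_\infty'$, which is \eqref{eq:StatDistODE-1-neg}.

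For $d\geq 0$ I would not repeat the computation but instead invoke the evenness of $\rho_\infty$ from \cref{StatDistIntEq-1}(ii): setting $u(d)=\rho_\infty(-d)=\rho_\infty(d)$ and applying \eqref{eq:StatDistODE-1-neg} at the point $-d<0$ gives $u''(d)=\rho_\infty''(-d)=-(2+e^{-d})\rho_\infty(-d)+3\rho_\infty'(-d)=-(2+e^{-d})u(d)-3u'(d)$, which is \eqref{eq:StatDistODE-1-pos}. (Equivalently, one could differentiate \eqref{inteqneg} directly with the analogous abbreviations; the bookkeeping is the same up to sign changes and the roles of $e^{d}$ and $e^{2d}$.) I do not expect any genuine obstacle here beyond keeping the signs and the $e^{d}$ versus $e^{2d}$ weights straight during the two differentiations; the only point that genuinely needs a word of justification, and which I would therefore settle first, is the differentiability of $\rho_\infty$ on each half\hyp{}line, for which the bootstrap from continuity of $K$ suffices.
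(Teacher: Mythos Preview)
Your proposal is correct and follows essentially the same route as the paper: differentiate the fixed\hyp{}point integral equation \eqref{inteqpos} twice on $d<0$, eliminate the two integral terms $\int_{-\infty}^{d}\rho_\infty$ and $\int_{d}^{\infty}\rho_\infty e^{-\delta}$ using the resulting linear relations, and then obtain the $d\geq 0$ equation by the evenness of $\rho_\infty$ from \cref{StatDistIntEq-1}(ii). The only differences are cosmetic---you track both integrals $A,B$ symmetrically whereas the paper first reduces to a single integral before the second differentiation---and you add an explicit remark on the smoothness bootstrap that the paper leaves implicit.
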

\begin{proof}
We only prove the claim for $d<0$. The case $d\geq0$ can be shown in the same way or one uses (ii) of \cref{StatDistIntEq-1}. In the case $d<0$ we know that $\rho_\infty$ is a solution to the integral equation \eqref{inteqpos}.
Differentiating this equation with respect to $d$ and using \eqref{inteqpos} again we obtain
\begin{align}
\rho_\infty'(d)=& e^{ d}\rho_\infty(d)+e^{d}\int_{-\infty}^d{d\delta\,\rho_\infty(\delta)}\notag\\
            \label{difffirststep}&- e^{d}\rho_\infty(d)+2 e^{2 d}\int_{d}^{\infty}{d\delta\,\rho_\infty(\delta)e^{-\delta}}\\
            =&2\rho_\infty(d)-e^{ d}\int_{-\infty}^d{d\delta\,\rho_\infty(\delta)}\notag.
\end{align}
Differentiating again and eliminating the remaining integral via \eqref{difffirststep} yields
\begin{align*}
\rho_\infty''(d)=& 2\rho_\infty'(d)-e^{ d}\rho_\infty(d)-e^{ d}\int_{-\infty}^d{d\delta\,\rho_\infty(\delta)}\\
                =&3\rho_\infty'(d)-\left[2+e^{ d}\right]\rho_\infty(d).
\end{align*}
\end{proof}
\begin{proposition}
\label{StatDist-1}
The density $\rho_\infty$ of the stationary distribution of the Markov chain $(\Delta_n)_{n\geq 0}$ is
\begin{equation}
\rho_\infty(d)=\frac{1}{2\operatorname{J}_2(2)}e^{-\frac{3}{2}|d|}\operatorname{J}_1\left(2e^{-\frac{1}{2}|d|}\right),
\end{equation}
where $\operatorname{J}_\nu$ are Bessel functions of the first kind.
\end{proposition}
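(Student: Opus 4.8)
The plan is to solve the second-order linear ODE of \cref{StatDistODE-1} in closed form, using that by part~(ii) of \cref{StatDistIntEq-1} the density $\rho_\infty$ is even, so it suffices to work on the half-line $(-\infty,0)$ and then reflect. There the equation reads $\rho_\infty''-3\rho_\infty'+(2+e^{d})\rho_\infty=0$. First I would strip off the first-order term by the substitution $\rho_\infty(d)=e^{3d/2}\phi(d)$, which reduces the equation to $\phi''+(e^{d}-\tfrac14)\phi=0$. Then, setting $t=2e^{d/2}$ --- so that $t$ runs over $(0,2)$ as $d$ runs over $(-\infty,0)$ --- and using $\tfrac{d}{dd}=\tfrac{t}{2}\tfrac{d}{dt}$ together with $e^{d}=t^{2}/4$, the equation becomes Bessel's equation of order one, $t^{2}\ddot\phi+t\dot\phi+(t^{2}-1)\phi=0$. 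Its general solution therefore yields, on $(-\infty,0)$,
\begin{equation*}
\rho_\infty(d)=e^{3d/2}\Bigl[c_{1}\operatorname{J}_{1}\!\bigl(2e^{d/2}\bigr)+c_{2}\operatorname{Y}_{1}\!\bigl(2e^{d/2}\bigr)\Bigr],
\end{equation*}
where $\operatorname{Y}_{1}$ is the Bessel function of the second kind and $c_{1},c_{2}$ are constants.

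The remaining task is to pin down $c_{1}$ and $c_{2}$. The key observation is that the ODE in \cref{StatDistODE-1} was obtained by differentiating the integral equation \eqref{inteqpos} twice and is hence strictly weaker, so a solution of the ODE need not solve \eqref{inteqpos}; re-imposing \eqref{inteqpos} removes the spurious degree of freedom. Concretely I would substitute the displayed general solution (even-ly extended) back into \eqref{inteqpos}, evaluating the two integrals by means of the primitives $\int v^{2}\mathcal{Z}_{1}(v)\,dv=v^{2}\mathcal{Z}_{2}(v)$ and $\int \mathcal{Z}_{1}(v)\,dv=-\mathcal{Z}_{0}(v)$ (valid for any cylinder function $\mathcal{Z}\in\{\operatorname{J},\operatorname{Y}\}$) and simplifying with the recurrence $\mathcal{Z}_{0}(z)+\mathcal{Z}_{2}(z)=\tfrac{2}{z}\mathcal{Z}_{1}(z)$. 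One finds that the $\operatorname{J}_{1}$-branch reproduces \eqref{inteqpos} identically, while the $\operatorname{Y}_{1}$-branch leaves behind a nonzero term proportional to $e^{d}$ --- ultimately because $v^{2}\operatorname{Y}_{2}(v)\to-4/\pi$, not $0$, as $v\to0^{+}$ --- so that necessarily $c_{2}=0$. (Alternatively one may appeal to \cref{DeltaConv}: the Markov transfer operator of \cref{TransKern-1} has, up to scaling, a unique nonnegative fixed point, so it is enough to note that $c_{2}=0$ already produces one, $\operatorname{J}_{1}$ being positive on $(0,2]$ since its first positive zero exceeds $2$.)

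With $c_{2}=0$ in hand, reflecting by evenness gives $\rho_\infty(d)=c_{1}e^{-3|d|/2}\operatorname{J}_{1}\!\bigl(2e^{-|d|/2}\bigr)$ on all of $\mathbb{R}$, and it only remains to normalize. Substituting $u=e^{-d/2}$ and then $v=2u$ one computes $\int_{\mathbb{R}}\rho_\infty(d)\,dd=2c_{1}\int_{0}^{\infty}e^{-3d/2}\operatorname{J}_{1}(2e^{-d/2})\,dd=\tfrac{c_{1}}{2}\int_{0}^{2}v^{2}\operatorname{J}_{1}(v)\,dv=2c_{1}\operatorname{J}_{2}(2)$, whence $c_{1}=1/\bigl(2\operatorname{J}_{2}(2)\bigr)$, which is exactly the asserted expression.

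I expect the second step to contain the only real obstacle: one has to notice that passing to the differential equation over-counts solutions and then correctly eliminate the $\operatorname{Y}_{1}$-contribution, either through the Bessel-primitive bookkeeping above or via the uniqueness supplied by \cref{DeltaConv}. The reduction to Bessel's equation, the integral identities for cylinder functions, and the final normalization integral are all routine computations (and, as the author notes for related calculations, can be cross-checked with a computer algebra system).
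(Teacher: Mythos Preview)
Your proposal is correct and follows essentially the same route as the paper: reduce the ODE of \cref{StatDistODE-1} to Bessel's equation of order one via $\rho_\infty(d)=e^{3d/2}\tilde\rho(2e^{d/2})$, obtain the general solution $e^{-3|d|/2}\bigl[c_1\operatorname{J}_1+c_2\operatorname{Y}_1\bigr]$, eliminate $c_2$ by re-imposing the integral equation \eqref{inteq}, and normalize via $\int_0^2 v^2\operatorname{J}_1(v)\,dv=4\operatorname{J}_2(2)$. The only tactical difference is that the paper evaluates \eqref{inteqpos} just at $d\to 0$ (where the discrepancy collapses to a single constant $-1/\pi$), whereas you substitute for general $d$; in fact the residual for the $\operatorname{Y}_1$-branch is $e^{d}/\pi-2e^{2d}/\pi$ rather than a pure $e^{d}$ term, since the $\int_0^\infty e^{-5\delta/2}\operatorname{Y}_1(2e^{-\delta/2})\,d\delta$ piece also picks up a boundary contribution, but your conclusion $c_2=0$ (and your uniqueness alternative via \cref{DeltaConv}) is unaffected.
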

\begin{proof}
For $d<0$, we write $\rho_\infty$ as $\rho_\infty(d)=e^{\frac{3}{2} d}\tilde \rho_\infty\left(2e^{\frac{1}{2} d}\right)$ with a function $\tilde \rho_\infty$ which is to be determined. If this ansatz is inserted into \eqref{eq:StatDistODE-1-neg} and if we replace $d$ by $2\log\frac{z}{2}$ we obtain $z^2 \frac{d^2\tilde\rho_\infty}{dz^2}(z)+z\frac{d\tilde\rho_\infty}{dz}(z)+(z^2-1)\tilde\rho_\infty(z)=0$. This is the differential equation the solution to which are by definition the Bessel functions of the first and second kind, denoted by $\operatorname{J}_\nu$ and $\operatorname{Y}_\nu$, respectively \citep[see][equation 9.1.1]{abramowitz1965hmf}. In this particular case $\nu=1$. One can apply an analogous method for the case $d>0$ to find that the general solution of equation \eqref{eq:StatDistODE-1} is given by
\begin{equation}
\label{solutiondens}
\rho_\infty(d)=e^{-\frac{3}{2} |d|}\left[c_1\operatorname{J}_1\left(2e^{-\frac{1}{2} |d|}\right)+c_2 \operatorname{Y}_1\left(2e^{-\frac{1}{2} |d|}\right)\right].
\end{equation}
In order to determine the two constants $c_1$ and $c_2$ we insert this general solution into the integral equation \eqref{inteq} and take the limit $d\to 0$ (it does not make a difference whether we take $d<0$ or $d\geq0$). We choose $d<0$ and obtain
\begin{equation*}
\rho_\infty(0) = \int_{-\infty}^0{d\delta \rho_\infty(\delta)}+\int_0^\infty{d\delta \rho_\infty(\delta)e^{-\delta}},
\end{equation*}
in which we replace $\rho_\infty(\delta)$ by the expression given in \eqref{solutiondens} to get
\begin{align*}
c_1\operatorname{J}_1\left(2\right)+c_2 \operatorname{Y}_1\left(2\right) =& c_1\left[\int_{-\infty}^0{d\delta\, e^{\frac{3}{2} d}\operatorname{J}_1\left(2e^{\frac{1}{2} d}\right)} + \int_0^\infty{d\delta\, e^{-\frac{3}{2} d}\operatorname{J}_1\left(2e^{-\frac{1}{2} d}\right)e^{-\delta}}\right]\\
&+c_2\left[\int_{-\infty}^0{d\delta\,e^{\frac{3}{2} d}\operatorname{Y}_1\left(2e^{\frac{1}{2} d}\right)} + \int_0^\infty{d\delta \,e^{-\frac{3}{2} d}\operatorname{Y}_1\left(2e^{-\frac{1}{2} d}\right)e^{-\delta}}\right]\\
=&c_1\left[\operatorname{J}_2\left(2\right)+\operatorname{J}_0\left(2\right)\right]+c_2 \left[\left(\frac{1}{\pi} + \operatorname{Y}_2\left(2\right)\right) + \left(-\frac{2}{\pi} + \operatorname{Y}_0\left(2\right)\right)\right],
\end{align*}
Since $\operatorname{J}_1\left(2\right)=\operatorname{J}_0\left(2\right)+\operatorname{J}_2\left(2\right)$, $\operatorname{Y}_1\left(2\right)=\operatorname{Y}_0\left(2\right)+\operatorname{Y}_2\left(2\right)$ and $-1/\pi\neq 0$ it follows that $c_2=0$. The requirement that $\rho_\infty$ integrate to one implies
\begin{align*}
c_1^{-1} =& \int_{\mathbb{R}}{d\delta\,e^{-\frac{3}{2} |\delta|}\operatorname{J}_1\left(2e^{-\frac{1}{2} |\delta|}\right)}=\frac{1}{2}\int_0^2{dy\,y^2 \operatorname{J}_1(y)}=2\operatorname{J}_2\left(2\right).
\end{align*}
\end{proof}
\begin{lemma}
\label{HMM}
There exists an $\mathbb{R}\times \mathbb{R}_+^3$\hyp{}valued ergodic Markov chain $(M_n)_{n\geq 1}$ and a function $f:\mathbb{R}\times \mathbb{R}_+^3\to\mathbb{R}$ such that $\Lambda_n=f(M_n)$ for all $n\geq 1$.
\end{lemma}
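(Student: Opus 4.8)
The plan is to build $(M_n)_{n\ge 1}$ on $\mathbb{R}\times\mathbb{R}_+^3$ by adjoining to the chain $(\Delta_n)$ of \cref{TransKern-1} the three exponential weights attached at the $n$-th column. \emph{First} I would record a recursion for $l_n$ itself, parallel to the one in \eqref{DeltaRec} for $\Delta_n$. Revisiting the three-event decomposition in the proof of \cref{TransKern-1}: on \eqref{event1} and \eqref{event2} the shortest path to $(n,0)$ ends with the edge $X_n$, so $l_n=l_{n-1}+X_n$; on \eqref{event3} it ends with $Z_n$ after reaching $(n,1)$ along $Y_n$, so $l_n=l_{n-1}'+Y_n+Z_n=l_{n-1}+\Delta_{n-1}+Y_n+Z_n$; and since \eqref{event3} is precisely $\{X_n\ge\Delta_{n-1}+Y_n+Z_n\}$, the three cases glue into
\begin{equation*}
\Lambda_n=l_n-l_{n-1}=\min\{X_n,\ \Delta_{n-1}+Y_n+Z_n\}.
\end{equation*}
Hence, setting $M_n\coloneqq(\Delta_{n-1},X_n,Y_n,Z_n)$ and $f(\delta,x,y,z)\coloneqq\min\{x,\delta+y+z\}$, which is continuous and hence measurable, one has $\Lambda_n=f(M_n)$ for all $n\ge 1$; this also checks out at $n=1$, where $l_1=\min\{X_1,Z_0+Y_1+Z_1\}$.

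\emph{Second} I would verify the Markov property. Let $g(\delta,x,y,z)\coloneqq\min\{\delta+y,x+z\}-\min\{x,\delta+y+z\}$ be the update map read off from \eqref{DeltaRec}, so that $\Delta_n=g(M_n)$ and therefore $M_{n+1}=\bigl(g(M_n),X_{n+1},Y_{n+1},Z_{n+1}\bigr)$. As $(X_{n+1},Y_{n+1},Z_{n+1})$ are independent unit exponentials independent of $\sigma(M_1,\dots,M_n)$, the conditional law of $M_{n+1}$ given the past depends on $M_n$ only, so $(M_n)_{n\ge 1}$ is a Markov chain. Its invariant law is the product of $\rho_\infty$ (the stationary density from \cref{StatDist-1}) with three independent unit exponentials: in this regime $\Delta_{n-1}\sim\rho_\infty$ is independent of $(X_n,Y_n,Z_n)$, so $\Delta_n=g(M_n)\sim\rho_\infty$ by stationarity of the $\Delta$-chain, and the fresh exponentials at column $n+1$ are independent of $\Delta_n$.

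\emph{Third} I would establish ergodicity. The one-step kernel of $(M_n)$ is carried by the hypersurface $\{\delta'=g(\delta,x,y,z)\}$ and so is not absolutely continuous, which blocks a direct appeal to the argument of \cref{DeltaConv}. The two-step kernel, however, is absolutely continuous: from $(\delta,x,y,z)$, writing $\delta_1=g(\delta,x,y,z)$, the law of $M_{n+2}$ has Lebesgue density $(\delta',x',y',z')\mapsto K(\delta_1,\delta')\,e^{-(x'+y'+z')}$ on $\mathbb{R}\times\mathbb{R}_+^3$, where $K$ is the transition kernel from \cref{TransKern-1}. Since $K$ is strictly positive and continuous and the exponential density is positive on $\mathbb{R}_+^3$, this two-step density is everywhere strictly positive and jointly continuous in its arguments; the same Perron--Frobenius/Jentzsch-type reasoning invoked for \cref{DeltaConv}, now applied to the two-step transition operator, produces a spectral gap, hence a unique invariant probability and convergence to it from any initial state (equivalently, $(M_n)$ is positive Harris recurrent and aperiodic). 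This delivers the ergodicity of $(M_n)_{n\ge 1}$ required to invoke \cref{FPP-rate-formula}.

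The genuinely delicate part is the gluing in the first step: one must argue, as in the proof of \cref{TransKern-1}, that almost surely $Z_n$ lies on at most one of the two shortest paths, and then correctly identify on each of the three events which edge terminates the path to $(n,0)$, so that the three formulas for $l_n-l_{n-1}$ really combine into the single minimum above. The only conceptual point beyond that is the passage from the degenerate one-step kernel to the non-degenerate two-step kernel in the ergodicity argument; the remainder is bookkeeping of the kind already carried out for \cref{TransKern-1} and \cref{DeltaConv}.
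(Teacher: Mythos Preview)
Your construction is identical to the paper's: $M_n=(\Delta_{n-1},X_n,Y_n,Z_n)$ with $f(\delta,x,y,z)=\min\{x,\delta+y+z\}$, and the Markov property is obtained via the update map $g$ read off from \eqref{DeltaRec}, exactly as the paper does. The one place you go further is ergodicity: the paper simply asserts it as ``a direct consequence of \cref{DeltaConv}'', whereas you correctly note that the one-step kernel of $(M_n)$ is supported on the hypersurface $\{\delta'=g(\delta,x,y,z)\}$ and hence singular, and you instead apply the Jentzsch-type argument to the two-step kernel $K(g(\delta,x,y,z),\delta')e^{-(x'+y'+z')}$, which is strictly positive and continuous on $\mathbb{R}\times\mathbb{R}_+^3$. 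This extra care is justified and makes explicit a point the paper leaves implicit.
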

\begin{proof}
Set $M_n=(\Delta_{n-1},X_n,Y_n,Z_n)$ and $f(\delta,x,y,z)=\min\{x,\delta+y+z\}$. It is then clear that $f(M_n)=\Lambda_n$ (see \eqref{LambdaRec}) and $(M_n)_{n\geq 1}$ is Markov because $\Delta_{n-1}$ can be written as $\min\{\Delta_{n-2}+Y_{n-1},X_{n-1}+Z_{n-1}\}-\min\{X_{n-1},\Delta_{n-2}+Y_{n-1}+Z_{n-1}\}$. (cf. equation \eqref{DeltaRec} in the proof of \cref{TransKern-1}.) Ergodicity is a direct consequence of \cref{DeltaConv}.
\end{proof}

We will now use this result to compute the distribution of $\Lambda\coloneqq\lim_{n\to\infty}{\Lambda_n}$, the expected value of which is, by \cref{FPP-rate-formula}, the percolation rate we are looking for.
\begin{lemma}
\label{StatDistLambda-1}
For each natural number $n$, the density $\eta_n$ of the distribution of $\Lambda_n$ is $
\eta_n(l)=\int_{\mathbb{R}}{d\delta\, \rho_{n-1}(\delta)Q(\delta,l)}$ with $Q:\mathbb{R}^2\to\mathbb{R}$ given by
\begin{equation}
Q(\delta,l)= e^{- l}\begin{cases}
 e^{\delta}(l-\delta)                         & l<0           \wedge \delta\leq l,\\
e^{-(l-\delta)}\left(1+2(l-\delta)\right) & l\geq0        \wedge \delta\leq l,\\
1                                               & l\geq0        \wedge \delta >l,\\
0                                               & \text{otherwise}.
\end{cases}
\end{equation}
In particular, $\eta_\infty(l)=\int_{\mathbb{R}}{d\delta\, \rho_\infty(\delta)Q(\delta,l)}$.
\end{lemma}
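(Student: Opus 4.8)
The plan is to condition on $\Delta_{n-1}$ and to use that the ``last layer'' weights $X_n,Y_n,Z_n$ are independent of it. By the construction in the proof of \cref{HMM} one has the pathwise identity $\Lambda_n=f(M_n)=\min\{X_n,\Delta_{n-1}+Y_n+Z_n\}$, and since $\Delta_{n-1}$ is a function of the edge weights with index at most $n-1$, the triple $(X_n,Y_n,Z_n)$ of i.i.d.\ unit\hyp{}exponential variables is independent of $\Delta_{n-1}$. Hence it suffices to compute, for a fixed real parameter $\delta$, the Lebesgue density of $L_\delta\coloneqq\min\{X,\delta+Y+Z\}$ with $X,Y,Z$ i.i.d.\ unit exponentials, to check that it equals $Q(\delta,\cdot)$, and then to average over $\delta$ against the density $\rho_{n-1}$ of $\Delta_{n-1}$.

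To determine the law of $L_\delta$ I would work with the survival function and the independence of $X$ from $Y+Z$: $\mathbb{P}(L_\delta>l)=\mathbb{P}(X>l)\,\mathbb{P}(Y+Z>l-\delta)$. The first factor equals $1$ for $l<0$ and $e^{-l}$ for $l\geq0$; since $Y+Z$ has the $\Gamma(2,1)$ density $t\mapsto te^{-t}\mathbb{I}_{\{t\geq0\}}$, the second factor equals $1$ for $l-\delta<0$ and $(1+l-\delta)e^{-(l-\delta)}$ for $l-\delta\geq0$. Splitting on the signs of $l$ and of $l-\delta$ produces four regions. On $\{l<0,\ \delta>l\}$ the product is identically $1$, so $\mathbb{P}(L_\delta\leq l)=0$ and the conditional density vanishes, which is the ``otherwise'' branch of $Q$. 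On each of the three remaining regions I would differentiate $l\mapsto 1-\mathbb{P}(L_\delta>l)$ in $l$; the short computation yields $e^{-(l-\delta)}(l-\delta)$ on $\{l<0,\ \delta\leq l\}$, $e^{-(2l-\delta)}\bigl(1+2(l-\delta)\bigr)$ on $\{l\geq0,\ \delta\leq l\}$, and $e^{-l}$ on $\{l\geq0,\ \delta>l\}$, i.e.\ exactly the three non\hyp{}vanishing branches of $Q(\delta,l)$. Since $Q(\delta,\cdot)$ is, for each $\delta$, the density of the regular conditional law of $\Lambda_n$ given $\Delta_{n-1}=\delta$, the total\hyp{}probability formula gives $\eta_n(l)=\int_{\mathbb{R}}d\delta\,\rho_{n-1}(\delta)Q(\delta,l)$.

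For the final assertion I would pass to the limit $n\to\infty$: by \cref{DeltaConv} the densities $\rho_n$ converge pointwise to $\rho_\infty$, hence also in $L^1$ by Scheff\'e's lemma, and since $\delta\mapsto Q(\delta,l)$ is bounded for each fixed $l$ it follows that $\eta_n(l)\to\int_{\mathbb{R}}d\delta\,\rho_\infty(\delta)Q(\delta,l)$ for every $l$. As $\Lambda_n\convd\Lambda$, this limit is the density $\eta_\infty$ of $\Lambda$.

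I do not expect a genuine obstacle. The only care required is to keep the four sign\hyp{}regions apart, to carry out the elementary exponential algebra without slips (a natural place for the computer\hyp{}algebra cross\hyp{}check used elsewhere in the paper), and to invoke the routine facts cited above: independence of $(X_n,Y_n,Z_n)$ from $\Delta_{n-1}$, the total\hyp{}probability formula for mixture densities, and $L^1$\hyp{}convergence of $\rho_n$.
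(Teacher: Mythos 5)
Your proof is correct, and it reaches the same kernel $Q$ as the paper, but the computational route is genuinely different in one respect. The paper derives $Q$ unconditionally: it splits $\mathbb{P}(\Lambda_n\leq l)$ according to which of the two last\hyp{}step configurations realizes the minimum in \eqref{LambdaRec}, evaluates the two resulting triple integrals against $d\mathbb{P}^3(x,y,z)$ to obtain the functions $P_1(\delta,l)$ and $P_2(\delta,l)$, and then sets $Q=\partial_l(P_1+P_2)$ — exactly the same machinery as in the proof of \cref{TransKern-1}. You instead condition on $\Delta_{n-1}=\delta$ (justified by the independence of $(X_n,Y_n,Z_n)$ from $\Delta_{n-1}$, which the paper uses implicitly) and factor the conditional survival function as $\mathbb{P}(X>l)\,\mathbb{P}(Y+Z>l-\delta)$, exploiting that $Y+Z$ is $\Gamma(2,1)$; differentiating this product over the four sign regions gives the branches of $Q$ directly, with no triple integrals and no case-by-case bookkeeping of events. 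Your region-by-region results agree with the stated $Q$ (e.g.\ $e^{-l}e^{\delta}(l-\delta)=e^{-(l-\delta)}(l-\delta)$ on $\{l<0,\ \delta\leq l\}$), so the computation checks out. Your approach buys a shorter and less error-prone derivation of the conditional density; the paper's event decomposition has the advantage of running in exact parallel with the derivation of the transition kernel $K$, reusing the same pictograph events and integral template. You also spell out the passage to $\eta_\infty$ (Scheff\'e plus boundedness of $\delta\mapsto Q(\delta,l)$), which the paper dismisses as clear; that is a legitimate and slightly more careful justification of the final sentence of the lemma.
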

\begin{proof}
As before we define events for each possible behaviour of the last step of the shortest path from $(0,0)$ to $(0,n)$:
\begin{align*}
\includegraphics[scale=1]{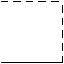}=&\left\{X_n+l_{n-1}\leq Z_n+Y_n+l_{n-1}'\right\}=\left\{X_n\leq \Delta_{n-1}+Y_n+Z_n\right\} \\
\includegraphics[scale=1]{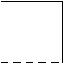}=&\left\{X_n+l_{n-1}\geq Z_n+Y_n+l_{n-1}'\right\}=\left\{X_n\geq \Delta_{n-1}+Y_n+Z_n\right\}.
\end{align*}
Clearly,
\begin{equation}
\label{LambdaRec}
\Lambda_n=\min\{X_n,\Delta_{n-1}+Y_n+Z_n\},
\end{equation}
i.e. on the first of the two events, $\Lambda_n$ is given by $X_n$ while on the second it equals $Y_n+Z_n+\Delta_{n-1}$. Now the procedure continues very similar to the proof of \cref{TransKern-1}. We compute the cumulative distribution function of $\Lambda$ as
\begin{equation}
\label{LambdaCDF-1}
\mathbb{P}\left(\Lambda_n\leq l\right)=\mathbb{P}\left(\{\Lambda_n\leq l\}\cap \includegraphics[scale=0.5]{quad4.pdf}\right) +\mathbb{P}\left(\{\Lambda_n\leq l\}\cap\includegraphics[scale=0.5]{quad5.pdf}\right),
\end{equation}
because on each of these two events we have an explicit expression for $\Lambda_n$. The first term leads to
\begin{align*}
\mathbb{P}\left(\{\Lambda_n\leq l\}\cap \includegraphics[scale=0.5]{quad4.pdf}\right)=& \int_\mathbb{R}{d\delta\,\rho_{n-1}(\delta)\int_{\mathbb{R}^3}{d\mathbb{P}^3(x,y,z)\,\mathbb{I}_{\{x\leq\delta+y+z\}}\mathbb{I}_{\{x\leq l\}}}}\\
=&\int_{\mathbb{R}}{d\delta\,\rho_{n-1}(\delta)\int_{0}^{\infty}{d\mathbb{P}(z)\int_0^{\infty}{d\mathbb{P}(y)\int_0^{\min{\{\delta+y+z,l\}}^+}{d\mathbb{P}(x)}}}}  \\
\eqqcolon& \int_{\mathbb{R}}{d\delta\,\rho_{n-1}(\delta)P_1(\delta,l)},
\end{align*}
where the function $P_1$ is the result of the $x$-, $y$- and $z$-integration and given explicitly by
\begin{equation*}
P_1(\delta,l)=\mathbb{I}_{\{l\geq0\}}\begin{cases}
\frac{1}{4}e^{-2 l+\delta}\left[e^{2 l}(3-2\delta)-3-2(l-\delta)\right]   & \delta\leq0,\\
\frac{1}{4}\left[4-e^{-\delta}-e^{-2 l+\delta}(2(l-\delta)+3)\right]        & 0<\delta\leq l,\\
1-e^{- l}                                                                                & \delta>l.
\end{cases}
\end{equation*}
Very similarly we obtain for the second term in \eqref{LambdaCDF-1}
\begin{align*}
\mathbb{P}\left(\{\Lambda_n\leq l\}\cap \includegraphics[scale=0.5]{quad4.pdf}\right)=& \int_\mathbb{R}{d\delta\,\rho_{n-1}(\delta)\int_{\mathbb{R}^3}{d\mathbb{P}^3(x,y,z)\,\mathbb{I}_{\{x\geq\delta+y+z\}}\mathbb{I}_{\{\delta+y+z\leq l\}}}}\\
=&\int_{\mathbb{R}}{d\delta\,\rho_{n-1}(\delta)\int_{0}^{\infty}{d\mathbb{P}(z)\int_0^{(l-z-\delta)^+}{d\mathbb{P}(y)\int_{(\delta+y+z)^+}^{\infty}{d\mathbb{P}(x)}}}}\\
\eqqcolon&\int_{\mathbb{R}}{d\delta\,\rho_{n-1}(\delta)P_2(\delta,l)},
\end{align*}
with $P_2$ given by
\begin{equation*}
P_2(\delta,l)=\begin{cases}
1-e^{ l-\delta}(1-\delta+ l)                                                    & l<0   \wedge \delta\leq l,\\
1-\frac{1}{4}e^{\delta}\left[3-2\delta+e^{-2 l}(1+2(l-\delta))\right]           & l\geq0\wedge\delta\leq0,\\
\frac{1}{4}e^{-2 l-\delta}\left[e^{2 l}-e^{2\delta}(1+2(l-\delta))\right]    & l\geq0\wedge 0\leq\delta\leq l,\\
0                                                                                           & \text{otherwise}.
\end{cases}
\end{equation*}
Putting things together, it follows that $\mathbb{P}\left(\Lambda_n\leq l\right)=\int_{\mathbb{R}}{d\delta\,\rho_{n_1}(\delta)\left[P_1(\delta,l)+P_2(\delta,l)\right]}$. Taking the derivative with respect to $l$ and interchanging differentiation and integration on the right hand side we obtain $Q(\delta,l)=\partial_l\left[P_1(\delta,l)+P_2(\delta,l)\right]$. The statement about the relation between the stationary densities $\eta_\infty$ and $\rho_\infty$ is clear.
\end{proof}
\begin{theorem}
\label{percolationrate1}
The percolation rate $\chi$ on $\mathcal{G}^{\{\mathcal{X},\mathcal{Y},\mathcal{Z}\}}$ is $\left[\frac{3}{2}-\frac{\operatorname{J}_1(2)}{2\operatorname{J}_2(2)}\right]
\approx 0.68\dots$.
\end{theorem}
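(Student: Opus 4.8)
The plan is to combine the three ingredients already at hand: the identity $\chi=\mathbb{E}[\Lambda]$ of \cref{FPP-rate-formula} (applicable thanks to \cref{HMM}), the formula $\eta_\infty(l)=\int_{\mathbb{R}}d\delta\,\rho_\infty(\delta)Q(\delta,l)$ of \cref{StatDistLambda-1} for the density of $\Lambda$, and the closed form of $\rho_\infty$ from \cref{StatDist-1}. Writing $\chi=\int_{\mathbb{R}}l\,\eta_\infty(l)\,dl$ and applying Fubini's theorem---legitimate because $\rho_\infty$ decays exponentially at $\pm\infty$ while $g(\delta)\coloneqq\int_{\mathbb{R}}l\,Q(\delta,l)\,dl$ grows at most linearly---one gets $\chi=\int_{\mathbb{R}}\rho_\infty(\delta)g(\delta)\,d\delta$. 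By \eqref{LambdaRec} the function $g$ is the conditional mean $\mathbb{E}[\Lambda_n\mid\Delta_{n-1}=\delta]=\mathbb{E}[\min\{X,\delta+Y+Z\}]$ with $X,Y,Z$ independent unit exponentials, so it may also be obtained directly.

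First I would compute $g$: conditioning on $Y+Z$, which is $\Gamma(2,1)$-distributed, and using $\mathbb{E}[\min\{X,s\}]=1-e^{-s}$ for $s\geq0$ and $=s$ for $s\leq0$, a short computation (splitting the integral at the point where $\delta+Y+Z$ changes sign when $\delta<0$) gives
\begin{equation*}
g(\delta)=\begin{cases}\delta+2+\tfrac{1}{4}e^{\delta}(2\delta-5), & \delta<0,\\ 1-\tfrac{1}{4}e^{-\delta}, & \delta\geq0,\end{cases}
\end{equation*}
the two branches agreeing at $\delta=0$ with value $\tfrac{3}{4}$.

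Next, inserting $\rho_\infty(\delta)=\tfrac{1}{2\operatorname{J}_2(2)}e^{-\frac{3}{2}|\delta|}\operatorname{J}_1\!\left(2e^{-\frac{1}{2}|\delta|}\right)$ and using that $\rho_\infty$ is even (part (ii) of \cref{StatDistIntEq-1}) to fold the integral onto $[0,\infty)$, the rate $\chi$ reduces to a fixed rational combination of the four numbers $\int_0^\infty\rho_\infty$, $\int_0^\infty\delta\,\rho_\infty$, $\int_0^\infty e^{-\delta}\rho_\infty$ and $\int_0^\infty\delta\,e^{-\delta}\rho_\infty$. The substitution $y=2e^{-\delta/2}$ turns each of these into an integral over $[0,2]$ of $y^{k}\operatorname{J}_1(y)$ or $y^{k}\operatorname{J}_1(y)\log y$ (the logarithm coming from $\delta=2\log(2/y)$) for small integers $k$, which I would evaluate using the antiderivative identity $\tfrac{d}{dy}\bigl(y^{\nu}\operatorname{J}_{\nu}(y)\bigr)=y^{\nu}\operatorname{J}_{\nu-1}(y)$ together with one integration by parts for the $\log y$ factors, and then collapse via the recurrence $\operatorname{J}_{\nu-1}(2)+\operatorname{J}_{\nu+1}(2)=\nu\operatorname{J}_{\nu}(2)$. (One can shortcut the computation of $\int_0^\infty e^{-\delta}\rho_\infty$ by evaluating \eqref{inteqneg} at $d=0$, which gives $\int_0^\infty e^{-\delta}\rho_\infty=\rho_\infty(0)-\tfrac{1}{2}$.) Substituting back, all Bessel terms except $\operatorname{J}_1(2)/\operatorname{J}_2(2)$ should cancel, leaving $\chi=\tfrac{3}{2}-\tfrac{\operatorname{J}_1(2)}{2\operatorname{J}_2(2)}\approx0.68$.

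The elementary calculus for $g$ is routine; the step demanding the most care is the evaluation of the $\log y$-weighted Bessel integrals and the bookkeeping needed to see that the various values $\operatorname{J}_0(2),\operatorname{J}_1(2),\operatorname{J}_2(2),\operatorname{J}_3(2)$ recombine, via the recurrences, into the stated clean expression---so that is where I would double-check the arithmetic most carefully.
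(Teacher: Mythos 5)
Your proposal is correct and follows essentially the same route as the paper: $\chi=\mathbb{E}[\Lambda]$ via \cref{FPP-rate-formula} and \cref{HMM}, Fubini applied to $\eta_\infty(l)=\int\rho_\infty(\delta)Q(\delta,l)\,d\delta$, and your $g(\delta)$ agrees exactly with the paper's $\int_\mathbb{R} l\,Q(\delta,l)\,dl$, after which the same Bessel\hyp{}integral bookkeeping yields $\tfrac{3}{2}-\tfrac{\operatorname{J}_1(2)}{2\operatorname{J}_2(2)}$. Your computation of $g$ as the conditional mean $\mathbb{E}[\min\{X,\delta+Y+Z\}]$ and the explicit handling of the $\log y$\hyp{}weighted integrals are just slightly more detailed versions of steps the paper states without derivation.
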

\begin{proof}
We have already argued that the time constant is the expectation of $\Lambda$, that is $\chi=\int_\mathbb{R}{dl\,l\,\eta_\infty(l)}$. From \cref{StatDistLambda-1} we know the explicit form of $\eta_\infty$ so we obtain $\chi=\int_\mathbb{R}{d\delta\,\rho_\infty(\delta)\int_\mathbb{R}{dl\,lQ(\delta,l)}}$, where the change of the order of integration is easily justified using Fubini's theorem. A direct calculation shows
\begin{equation*}
\int_\mathbb{R}{dl\,lQ(\delta,l)}=\frac{1}{4}\begin{cases}
8+4\delta-e^{\delta}(5-2\delta)&  \delta <0 \\
4-e^{-\delta}& \delta\geq 0
\end{cases},
\end{equation*}
and the percolation rate is
\begin{align*}
\chi =&\frac{1}{2\operatorname{J}_2(2)}\left[ \int_{\mathbb{R}_-}{d\delta\,e^{\frac{3}{2}\delta}\operatorname{J}_1\left(2e^{\frac{1}{2}\delta}\right)\left[8+4\delta-e^{\delta}(5-2\delta)\right]}\right.\\
&+\left.\int_{\mathbb{R}_+}{d\delta\,e^{-\frac{3}{2}\delta d}\operatorname{J}_1\left(2e^{-\frac{1}{2}\delta}\right)\left[4-e^{-\delta}\right]}\right]\\
=&\frac{1}{2\operatorname{J}_2(2)}\left[\frac{4\operatorname{J}_1(2)-7\operatorname{J}_0(2)}{4}+\frac{4\operatorname{J}_2(2)-\operatorname{J}_0(2)}{4}\right]=\frac{3}{2}-\frac{\operatorname{J}_1(2)}{2\operatorname{J}_2(2)}.
\end{align*}
\end{proof}
\section{Discussion}
The differences between the omitted computations for the cases $\{\mathcal{V},\mathcal{W},\mathcal{X},\mathcal{Y}\}$ and $\{\mathcal{W},\mathcal{X},\mathcal{Y},\mathcal{Z}\}$ and the ones presented are only a matter of degree, not of kind; for instance, there are not only three different cases to consider when proving the analogues of \cref{TransKern-1}, but rather eight and six, respectively. Moreover, for the case $\{\mathcal{W},\mathcal{X},\mathcal{Y},\mathcal{Z}\}$ the stationary density $\rho_\infty$ of $(\Delta_n)_{n\geq 1}$ (cf.\ \cref{StatDist-1}) is not symmetric. It is also the determination of this stationary distribution where things get more difficult with the unsolved cases: the characterizing differential equation (cf.\ \cref{StatDistODE-1}) then becomes non\hyp{}local, more specifically it involves both $\rho_\infty(d)$ and $\rho_\infty(-d)$, as well as their derivatives,  at the same time and so one can not solve it separately for $d>0$ and $d<0$ as we did. It might be possible to remedy this by computing $\rho_\infty$ not as an eigen function to the transition kernel $K$ itself but rather as an eigen function to its second convolution power $K^{(2)}(\delta,d)=\int_\mathbb{R}{d\sigma}K(\delta,\sigma)K(\sigma,d)$. As the method used in this paper is very similar to that in its antecedent \citet{flaxman2006fpp} its range of applicability is also essentially the same. It would be a natural generalization to consider graphs with vertex sets $\{1,\ldots,n\} \times \{0,\ldots,k\}$ for some integer $k \geq 1$ and for the directed first\hyp{}passage percolation problem, similar techniques as used here apply equally well to this more general set\hyp{}up. However, the combinatorial difficulties arising from the need to explicitly keep track of the shortest paths on $G_{n+1} \backslash G_n$ seem very hard to overcome and for undirected percolation a similarly easy recursive argument as in the case $k = 2$ is not an option.\newline Many thanks go to B\'alint Vir\'ag for very helpful advice.

\end{document}